\providecommand{\U}[1]{\protect\rule{.1in}{.1in}}
\newtheorem{theorem}{Theorem}
\newtheorem{acknowledgement}[theorem]{Acknowledgement}
\newtheorem{definition}[theorem]{Definition}
\newtheorem{lemma}[theorem]{Lemma}
\newtheorem{proposition}[theorem]{Proposition}
\newtheorem{remark}[theorem]{Remark}
\newenvironment{proof}[1][Proof]{\noindent\textbf{#1.} }{\ \rule{0.5em}{0.5em}}
\definecolor{lightgrey}{gray}{0.5}
\begin{document}

\title{Uniform convergence of proliferating particles to the FKPP equation}
\author{Franco Flandoli\thanks{Dipartimento di Matematica, Universit\`{a} di Pisa,
Italy. E-mail: \textsl{flandoli@dma.unipi.it}}, Matti
Leimbach\thanks{Technische Universit\"{a}t Berlin, Germany. E-mail:
\textsl{mattileimbach@msn.com}} \ and Christian Olivera\thanks{Departamento de
Matem\'{a}tica, Universidade Estadual de Campinas, Brazil. E-mail:
\textsl{colivera@ime.unicamp.br}. }}
\maketitle

\begin{abstract}
In this paper we consider a system of Brownian particles with proliferation
whose rate depends on the empirical measure. The dependence is more local than
a mean field one and has been called moderate interaction by Oelschl\"{a}ger
\cite{Oel1}, \cite{Oel2}. We prove that the empirical process converges,
uniformly in the space variable, to the solution of the
Fisher-Kolmogorov-Petrowskii-Piskunov equation. We use a semigroup approach
which is new in the framework of these systems and is inspired by some
literature on stochastic partial differential equations.

\end{abstract}

\noindent\textbf{Keywords}\textit{\textbf{:} }Macroscopic limit; particle
system with proliferation; FKPP equation; stochastic PDEs; semigroup approach.

\noindent\textbf{MSC Subject Classification:} Primary: 60K35, 35K57;
Secondary: 60F17, 35K58, 92C17.

\section{Introduction\label{Intro}}

Consider the so called Fisher-Kolmogorov-Petrowskii-Piskunov (FKPP) equation -
with all constants equal to 1, which is always possible by suitable rescalings%
\begin{equation}
\frac{\partial u}{\partial t}=\Delta u+u\left(  1-u\right)  ,\qquad
u|_{t=0}=u_{0}. \label{FKPP}%
\end{equation}
This is a paradigm of equations arising in biology and other fields. For
instance, in the mathematical description of cancer growth, although being too
simplified to capture several features of true tumors, it may serve to explore
mathematical features of diffusion and proliferation. In such applications, it
describes a density of cancer cells which diffuse and proliferate with
proliferation modulated by the density itself, such that, starting with an
initial density $0\leq u_{0}\leq1$, the growth due to proliferation cannot
exceed the threshold $1$. Having in mind this example, it is natural to expect
that this equation is the macroscopic limit of a system of microscopic
particles, like cancer cells, which are subject to proliferation. To be
biologically realistic, we have to require that the proliferation rate is not
uniform among particles but depends on the concentration of particles:
wherever particles are more concentrated, there is less space and more
competition for nutrients, which slows down proliferation. We prove a result
of convergence of such kind of proliferation particle systems - as described
in detail in section \ref{section microscopic} below - to the FKPP equation. A
key point of the microscopic model that should be known in advance, to
understand this introduction, is that the proliferation rate of particle
``$a$" (see below the meaning of this index) is given by the random
time-dependent rate
\begin{equation}
\lambda_{t}^{a,N}=\left(  1-\left(  \theta_{N}\ast S_{t}^{N}\right)  \left(
X_{t}^{a,N}\right)  \right)  ^{+}, \label{definition lambda}%
\end{equation}
where $N$ is the number of initial particles, $X_{t}^{a,N}$ is the position of
particle ``$a$", $S_{t}^{N}$ is the empirical measure, $\theta_{N}$ is a
family of smooth mollifiers - hence $\theta_{N}\ast S_{t}^{N}$ is a smoothed
version of the empirical density. Formula (\ref{definition lambda}) quantifies
the fact that proliferation is slower when the empirical measure is more
concentrated, and stops above a threshold. Since there is no reason why the
mollified empirical measure $\theta_{N}\ast S_{t}^{N}$ is smaller than one, we
have to cut with the positive part, in (\ref{definition lambda}). Hence,
initially the limit PDE will have the proliferation term $u\left(  1-u\right)
^{+}$, which is meaningful also for $u>1$, but by a uniqueness result, the
term reduces to $u\left(  1-u\right)  $ when $0\leq u_{0}\leq1$.

The final result is natural and expected but there is a technical difficulty
which, in our opinion, is not sufficiently clarified in the literature. The
proof of convergence of the particle system to the PDE relies on the tightness
of the empirical measure and a passage to the limit in the identity satisfied
by the empirical measure. This identity includes the nonlinear term%
\[
\left\langle \left(  1-\theta_{N}\ast S_{t}^{N}\right)  ^{+}S_{t}^{N}%
,\phi\right\rangle
\]
where $\phi$ is a smooth test function. Since $S_{t}^{N}$ converges only
weakly, it is required that $\theta_{N}\ast S_{t}^{N}$ \textit{converges
uniformly}, in the space variable, in order to pass to the limit. Maybe in
special cases one can perform special tricks but the question of uniform
convergence is a natural one in this problem and it is also of independent
interest, hence we investigate when it holds true.

Following the proposal of K. Oelschl\"{a}ger \cite{Oel1}, \cite{Oel2}, we
assume
\begin{equation}
\theta_{N}\left(  x\right)  =N^{\beta}\theta\left(  N^{\beta/d}x\right)
.\label{theta N Oel}%
\end{equation}
Here $\theta$ is a probability density with a Sobolev regularity
$W^{\alpha_{0},2}\left(  \mathbb{R}^{d}\right)  $ specified by the technical
assumption (\ref{assumption on theta}) below. Recall that the case $\beta=0$
is the mean field one (long range interaction), the case $\beta=1$ corresponds
to local (like nearest neighbor) interactions, while the case $0<\beta<1$
corresponds to an intermediate regime, called \textquotedblleft
moderate\textquotedblright\ by \cite{Oel1}. Our main result is that uniform
convergence of $\theta_{N}\ast S_{t}^{N}$ to $u$ holds under the condition
\[
\beta<\frac{1}{2}.
\]
In addition to our main result, Theorem \ref{Thm 1}, see also Appendix
\ref{appendix B}\ where we show that this condition arises with other proofs
of uniform convergence. We believe this condition is strict for the uniform
convergence.
A second motivation for the analysis of uniform convergence, besides the
problem of passage to the limit in the nonlinear term outlined above, is the
question whether a "front" of microscopic particles which moves due to
proliferation approximates the traveling waves of FKPP equation. Results in
this direction seem to be related to uniform convergence of mollified
empirical measure but they require also several other ingredients and go
beyond the scope of the present paper, hence they are not discussed here.

\subsection{Comparison with related problems and results}

First, let us clarify that the problem treated here is more correct and
difficult than a two-step approach which does not clarify the true relation
between the particle system and the PDE, although it gives a plausible
indication of the link. The two-step approach freezes first the parameter in
the mollifier, namely it treats particles proliferating with rate%
\[
\lambda_{t}^{a,N_{0},N}=\left(  1-\left(  \theta_{N_{0}}\ast S_{t}^{N_{0}%
,N}\right)  \left(  X_{t}^{a,N_{0},N}\right)  \right)  ^{+}%
\]
and proves that $S_{t}^{N_{0},N}$ weakly converges as $N\rightarrow\infty$, to
the solution $u_{N_{0}}$ of the following equation with non-local
proliferation%
\begin{equation}
\frac{\partial u_{N_{0}}}{\partial t}=\Delta u_{N_{0}}+u_{N_{0}}\left(
1-\theta_{N_{0}}\ast u_{N_{0}}\right)  ^{+}. \label{mollified FKPP}%
\end{equation}
The second step consists in proving that $u_{N_{0}}$ converges to the solution
$u$ of the FKPP equation. The link between the particle system $X_{t}%
^{a,N_{0},N}$ and the solution $u$ of the FKPP equation is only conjectured by
this approach. In principle the conjecture could be even wrong. Take a system
of particle interactions with short range couplings, where the two-steps
approach leads to the porous media equation with the non-linearity $\Delta
u^{2}$ (see \cite{Phi}). But a direct link between the particle system and the
limit PDE (the so called hydrodynamic limit problem)\ leads to a non-linearity
of the form $\Delta f\left(  u\right)  $ where $f\left(  u\right)  $ is not
necessarily $u^{2}$ (see \cite{Va}, \cite{Uc}). For a proof of the
\textit{mean field} result of convergence of $S_{t}^{N_{0},N}$ to $u_{N_{0}}$
as $N\rightarrow\infty$, see for instance \cite{ChMeleard}, \cite{FlaLeim}.
The issue of uniform convergence of $\theta_{N}\ast S_{t}^{N}$ to $u$ does not
arise and weak convergence of the measures $S_{t}^{N_{0},N}$ is sufficient.

Going back to the problem with the rates (\ref{definition lambda}), K.
Oelschl\"{a}ger papers \cite{Oel1}, \cite{Oel2} have been our main source of
inspiration. Our attempt in the present work is to clarify\ a result of
convergence in the case of diffusion and proliferation under assumptions
comparable to those of \cite{Oel1}, \cite{Oel2} but possibly with some
additional degree of generality and with a new proof.

We have extended the assumption $\beta<\frac{d}{\left(  d+1\right)  \left(
d+2\right)  }$ and removed the restriction $V=W\ast W$ of \cite{Oel2} and,
hopefully, we have given a modern proof which clarifies certain issues of the
tightness and the convergence problem. Concerning extensions of the range of
$\beta$, maybe there are other directions, as remarked in \cite{Oel2}, page
575; our specific extension is however motivated not only by the generality
but also by the property of uniform convergence (not proved in \cite{Oel2}),
which seems relevant in itself.

Other interesting works related to the problem of particle approximation of
FKPP equation are \cite{Meleard}, \cite{MelRoelly}, \cite{Met}, \cite{Nappo},
\cite{Ste} and \cite{Baker}, \cite{BoVe} from the more applied literature. For
the FKPP limit of discrete lattice systems, even the more difficult question
of the hydrodynamic limit has been solved, see \cite{DeMFerrLeb} with
completely local interaction, but the analogous problem for diffusions is more
difficult and has not been done.

To solve the problem of uniform convergence, we propose a new approach, by
semigroup theory. Traces of this approach can be found in \cite{Met} and
\cite{ChMeleard}, but have been used for other purposes. In the work
\cite{FlaLeim} it is remarked that uniform convergence can be obtained as a
by-product of energy inequalities and Sobolev convergence, under the
assumption $\beta<\frac{d}{d+2}$, but only in dimension $d=1$, where the
condition is more restrictive than $\beta<1/2$.

The approach extends to other models, in particular with interactions. With
the same technique, under appropriate assumptions on the convolution kernels
$\theta_{N}$ below, we may recover a result, under different assumptions, of
\cite{Oel1}, where the macroscopic PDE has the form%
\[
\frac{\partial u}{\partial t}=\Delta u-\operatorname{div}\left(  uF\left(
u\right)  \right)  +u\left(  1-u\right)  ,\qquad u|_{t=0}=u_{0}%
\]
and $F$ is a local nonlinear function, not a non-local operator as in mean
field theories.

Let us insist on the fact that our proliferation rate is natural from the
viewpoint of Biology. It is very different from the constant rate used in the
probabilistic formulae used by McKean and others to represent solutions of the
FKPP equations; these formula have several reasons of interest but do not have
a biological meaning - constant proliferation rate would lead to exponential
blow-up of the number of particles. Constant rates do not pose the
difficulties described above in taking the limit in the nonlinear term.
Approximation by finite systems of these representation formula therefore pose
different problems. For this and other directions, different from our one, see
\cite{McKean}, \cite{RegnierTalay} and references therein.

\subsection{The microscopic model\label{section microscopic}}

We consider a particle system on filtered probability space $\left(
\Omega,\mathcal{F},\mathcal{F}_{t},P\right)  $ with $N\in\mathbb{N}$ initial
particles. We label particles by $a\in\Lambda^{N}$, where
\[
\Lambda^{N}=\left\{  \left(  k,i_{1},...,i_{n}\right)  \colon i_{1}%
,...,i_{n}\in\left\{  1,2\right\}  ,k=1,...,N,n\in\mathbb{N}_{0}\right\}
\]
is the set of all particles. For a non-initial particle $a=\left(
k,i_{1},...,i_{n}\right)  $ we denote its parent particle by $(a,-)=\left(
k,i_{1},...,i_{n-1}\right)  $. Each particle has a lifetime, which is the
random time interval $I^{a,N}=[T_{0}^{a,N},T_{1}^{a,N})\subset\lbrack
0,\infty)$, where $T_{0}^{a,N},T_{1}^{a,N}$ are $\mathcal{F}_{t}$-stopping
times. We have $T_{0}^{a,N}=0$ for initial particles $a=(k)$, $k=1,\dots,N$
and $T_{0}^{a,N}=T_{1}^{(a,-),N}$ for other particles. The time $T_{1}^{a,N}$
at which a particle dies and splits into two (we call this a proliferation
event) is described more precisely below.\newline Particles are born at the
position their parent died, i.e. $X_{T_{0}^{a,N}}^{a,N}=X_{T_{1}^{\left(
a,-\right)  ,N}}^{\left(  a,-\right)  ,N}$ with the convention $X_{T_{1}%
^{a,N}}^{a,N}:=\lim_{t\uparrow T_{1}^{a,N}}X_{t}^{a,N}$. During its lifetime
the position of $a\in\Lambda^{N}$, $X_{t}^{a,N}\in\mathbb{R}^{d}$, is given
by
\begin{equation}
dX_{t}^{a,N}=\sqrt{2}dB_{t}^{a} \label{initial SDE}%
\end{equation}
where $B^{a}$ are independent Brownian motions in $\mathbb{R}^{d}$.\newline
Let $\Lambda_{t}^{N}$ denote the set of all particles alive at time $t$. We
define the empirical measure as
\[
S_{t}^{N}=\frac{1}{N}\sum_{a\in\Lambda_{t}^{N}}\delta_{X_{t}^{a,N}}.
\]
Take a family of standard Poisson processes $\left(  \mathcal{N}^{0,a}\right)
_{a\in\Lambda^{N}}$ which is independent of the Brownian motion and the
initial condition $X_{0}^{(k),N}$, $k=1,\dots,N$. The branching time
$T_{1}^{a,N}$ of particle $a\in\Lambda^{N}$ is the first (and only) jump time
of $\mathcal{N}_{t}^{a,N}:=\mathcal{N}_{\Lambda_{t}^{a,N}}^{0,a}$, where
$\Lambda_{t}^{a,N}=\int_{0}^{t}1_{s\in I^{a,N}}\lambda_{s}^{a,N}ds$ and the
random rate $\lambda_{t}^{a,N}$ is given by
\[
\lambda_{t}^{a,N}=\left(  1-\left(  \theta_{N}\ast S_{t}^{N}\right)  \left(
X_{t}^{a,N}\right)  \right)  ^{+}%
\]
where
\begin{equation}
\theta_{N}(x)=\epsilon_{N}^{-d}\theta\left(  \epsilon_{N}^{-1}x\right)
\label{theta N eps}%
\end{equation}
is a family of mollifiers with
\[
\epsilon_{N}=N^{-\frac{\beta}{d}}%
\]
namely we assume (\ref{theta N Oel}).

\subsection{Assumptions and main result\label{sect macroscopic limit}}

Throughout this paper we assume that
\begin{equation}
\beta\in(0,\frac{1}{2}) \label{assumption on eps beta}%
\end{equation}
and that $\theta:\mathbb{R}^{d}\rightarrow\mathbb{R}$ is a probability density
of class%
\begin{equation}
\theta\in W^{\alpha_{0},2}\left(  \mathbb{R}^{d}\right)  \text{ for some
}\alpha_{0}\in\left(  \frac{d}{2},\frac{d(1-\beta)}{2\beta}\right]
\label{assumption on theta}%
\end{equation}
(notice that, for $\beta>0$, the inequality $\frac{d}{2}<\frac{d(1-\beta
)}{2\beta}$ is equivalent to $\beta<\frac{1}{2}$). The weaker assumption
$\beta=1$ corresponds to nearest-neighbor (or contact) interaction and it is
just the natural scaling to avoid that the kernel is more concentrated than
the typical space around a single particle, when the particles are uniformly
distributed. The case $\beta=0$ corresponds to mean field interaction. The
explanation for condition (\ref{assumption on eps beta}) is given at the
beginning of Section \ref{section main estimate}. At the biological level it
means that the modulation of proliferation by the local density of cells is
not completely local, but has a certain range of action, which is less than
long range as a mean field model.

Let us introduce the mollified empirical measure (the theoretical analog of
the numerical method of kernel smoothing) $h_{t}^{N} $ defined as%
\[
h_{t}^{N}(x)=\left(  \theta_{N}\ast S_{t}^{N}\right)  \left(  x\right)  .
\]

Concerning the initial condition, assume that $u_{0}\in L^{1}\left(
\mathbb{R}^{d}\right)  $, $0\leq u_{0}\left(  x\right)  \leq1$, $u_{0}$ is
uniformly continuous and $S_{0}^{N}$ converges weakly to $u_{0}\left(
x\right)  dx$, as $N\rightarrow\infty$, in probability. Moreover, assume that
for some $\rho_{0}\geq\alpha_{0}-1$
\begin{equation}
\sup_{N}E\left[  \int_{\mathbb{R}^{d}}\left\vert \left(  I-A\right)
^{\rho_{0}/2}h_{0}^{N}\left(  x\right)  \right\vert ^{2}dx\right]  <\infty.
\label{initial cond}%
\end{equation}
When the initial positions $X_{0}^{i}$, $i=1,...,N$, are independent
identically distributed with common probability density $u_{0}\in W^{\rho
_{0},2}\left(  \mathbb{R}^{d}\right)  $, with $\alpha_{0}-1\leq\rho_{0}%
\leq\alpha_{0}$, this condition is satisfied, see Proposition
\ref{Proposition sufficient cond} below. Finally, the definition of weak
solution of the PDE (\ref{FKPP}) is given below in Section
\ref{section auxiliary}.




\begin{theorem}
\label{Thm 1} \emph{Assume that $S_{0}^{N}$ converges weakly to $u_{0}\left(
x\right)  \mathrm{d}x$, as $N\rightarrow\infty$, in probability, where $u_{0}$
satisfies the assumptions above. Further, assume (\ref{assumption on eps beta}%
), (\ref{assumption on theta}) and (\ref{initial cond}). Then, for every
$\alpha\in(d/2,\alpha_{0})$, the process $h^{N}$ converges in probability in
the }

\begin{itemize}
\item \emph{weak star topology of $L^{\infty}\left(  0,T;L^{2}\left(
\mathbb{R}^{d}\right)  \right)  $, }

\item \emph{weak topology of $L^{2}\left(  0,T;W^{\alpha,2}\left(
\mathbb{R}^{d}\right)  \right)  $ }

\item \emph{strong topology of $L^{2}\left(  0,T;W_{loc}^{\alpha,2}\left(
\mathbb{R}^{d}\right)  \right)  $ }
\end{itemize}

\emph{as $N\rightarrow\infty$, to the unique weak solution of the PDE
(\ref{FKPP}). }
\end{theorem}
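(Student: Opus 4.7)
The plan is to follow a semigroup (mild-formulation) strategy: derive a stochastic evolution equation for $h^N_t$, establish uniform-in-$N$ bounds in the relevant Sobolev spaces, extract subsequential limits by compactness, identify any limit as a weak solution of (\ref{FKPP}), and then invoke uniqueness of that PDE to upgrade convergence to the full sequence in probability.

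First, I would derive a mild formulation. Setting $A=\Delta$, It\^o's formula applied to $\langle S^N_t,\phi\rangle$ followed by testing against $\theta_N(x-\cdot)$ yields an evolution equation of the form
\begin{equation*}
h^N_t = e^{tA} h^N_0 + \int_0^t e^{(t-s)A}\bigl[\theta_N \ast (\lambda^{\cdot,N}_s S^N_s)\bigr]\,ds + \int_0^t e^{(t-s)A}\,dM^N_s,
\end{equation*}
where $M^N$ is a martingale combining contributions from the driving Brownian motions and from the compensated jump part of the proliferation. Because $0\le \lambda^{a,N}_s\le 1$, the drift is controlled in terms of $h^N$, while the martingale carries a $1/N$ prefactor inherited from the weight in $S^N$.

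Next I would use the smoothing of the heat semigroup, in the form $\|e^{tA}\|_{W^{\sigma-\gamma,2}\to W^{\sigma,2}}\le C t^{-\gamma/2}$, together with the initial regularity (\ref{initial cond}), to derive the following uniform bounds for $h^N$: (i) a bound in $L^\infty(0,T;L^2(\mathbb{R}^d))$, (ii) a bound in $L^2(0,T;W^{\alpha,2}(\mathbb{R}^d))$ for every $\alpha<\alpha_0$, and (iii) a fractional time-regularity bound of the form $h^N\in W^{\gamma,2}(0,T;W^{-\kappa,2}_{loc})$ for small $\gamma,\kappa>0$, read off from the mild formula. The first two yield tightness in the two weak topologies of the theorem; combining (ii) with (iii) through an Aubin--Lions/Simon compactness embedding yields tightness in the strong $L^2(0,T;W^{\alpha,2}_{loc})$ topology.

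Finally, along a converging subsequence with limit $u$, I would pass to the limit in the weak formulation of the identity for $h^N$. The delicate term is $\bigl\langle(1-h^N_s)^+ S^N_s,\phi\bigr\rangle$. Since $\alpha>d/2$ gives the Sobolev embedding $W^{\alpha,2}\hookrightarrow C_b$, the strong local convergence of (iii) yields locally uniform convergence $h^N\to u$, so $(1-h^N_s)^+\to (1-u)^+$ locally uniformly. For the measure factor, I would rewrite $\bigl\langle(1-h^N_s)^+ S^N_s,\phi\bigr\rangle = \bigl\langle(1-h^N_s)^+\phi,\,h^N_s\bigr\rangle + \bigl\langle (1-h^N_s)^+(\phi - \theta_N\ast\phi),\,S^N_s\bigr\rangle$; the error term vanishes by uniform continuity of $\phi$ and the concentration of $\theta_N$. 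The limit is then a weak solution of (\ref{FKPP}) with the nonlinearity $u(1-u)^+$, which reduces to $u(1-u)$ once $0\le u\le 1$ is established by comparison from $0\le u_0\le 1$, and uniqueness of such weak solutions upgrades convergence to the full sequence.

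The main obstacle is the martingale estimate in the high norm $W^{\alpha,2}$ with $\alpha>d/2$: the quadratic variation of the stochastic convolution involves $\theta_N$ tested at particle positions, whose $W^{\alpha,2}$ norm blows up like $\epsilon_N^{-\alpha-d/2}=N^{\beta(2\alpha+d)/(2d)}$, and only the $1/N$ prefactor counteracts it. Balancing these exponents, together with the parabolic time-integrability supplied by the semigroup, is precisely where the constraint $\alpha_0\le d(1-\beta)/(2\beta)$ and hence $\beta<1/2$ enters. This is the estimate that the semigroup approach is specifically designed to make clean, and it is also the step that dictates the admissible range of $\beta$.
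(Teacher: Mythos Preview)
Your proposal follows essentially the same semigroup strategy as the paper: mild formulation, martingale estimates in $W^{\alpha,2}$ using analytic-semigroup smoothing (this is exactly where $\alpha_0\le d(1-\beta)/(2\beta)$ and hence $\beta<1/2$ appears, as you correctly identify), uniform $L^\infty_t L^2_x$ and $L^2_t W^{\alpha,2}_x$ bounds plus fractional time regularity in a negative Sobolev space, Aubin--Lions compactness, and identification of the limit via uniqueness for the auxiliary equation with nonlinearity $u(1-u)^+$.

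One concrete slip: your decomposition of the nonlinear term is not algebraically correct. You write
\[
\bigl\langle(1-h^N_s)^+ S^N_s,\phi\bigr\rangle = \bigl\langle(1-h^N_s)^+\phi,\,h^N_s\bigr\rangle + \bigl\langle (1-h^N_s)^+(\phi - \theta_N\ast\phi),\,S^N_s\bigr\rangle,
\]
but since $h^N_s=\theta_N\ast S^N_s$, the second term on the right should be $\bigl\langle (1-h^N_s)^+\phi - \tilde\theta_N\ast\bigl((1-h^N_s)^+\phi\bigr),\,S^N_s\bigr\rangle$ with $\tilde\theta_N(z)=\theta_N(-z)$; the factor $(1-h^N_s)^+$ cannot be pulled outside the convolution. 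The paper avoids this by inserting the limit $h$ as an intermediary and splitting into four pieces (replace $\theta_N\ast$ by nothing, then $h^N$ by $h$ inside $(1-\cdot)^+$, then $S^N$ by $h^N$, then $h$ back to $h^N$), each handled by a different convergence (mollifier error, uniform convergence via Sobolev embedding, weak convergence, local $L^2$ convergence). Your version is fixable along the same lines---the key ingredient, locally uniform convergence of $h^N$ from $W^{\alpha,2}\hookrightarrow C_b$, is exactly what you invoke---but as written the identity is false.
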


Note that the topology of convergences of $h_{t}^{N}$ includes the convergence
in $L^{2}\left(  0,T;C\left(  D\right)  \right)  $ for every regular bounded
domain $D\subset\mathbb{R}^{d}$. The notion of weak solution is given by
Definition \ref{def weak sol}.

\section{Preparation\label{section Formulation SPDE}}

\subsection{Analytic Semigroup and Sobolev Spaces\label{subsect Bessel}}

For every $\alpha\in\mathbb{R}$, the Sobolev spaces $W^{\alpha,2}\left(
\mathbb{R}^{d}\right)  $ are well defined, see \cite{Triebel} for the material
recalled here. For positive $\alpha$ the restriction of $f\in W^{\alpha
,2}\left(  \mathbb{R}^{d}\right)  $ to a ball $B\left(  0,R\right)  $ is in
$W^{\alpha,2}\left(  B\left(  0,R\right)  \right)  $. The family of operators,
for $t\geq0$,%
\[
\left(  e^{tA}f\right)  \left(  x\right)  =\int_{\mathbb{R}^{d}}\frac
{1}{\left(  4\pi t\right)  ^{d/2}}e^{-\frac{\left\vert x-y\right\vert ^{2}%
}{4t}}f\left(  y\right)  dy
\]
defines an analytic semigroup in each space $W^{\alpha,2}\left(
\mathbb{R}^{d}\right)  $. With little abuse of notation, we write $e^{tA}$ for
each value of $\alpha$. The infinitesimal generator, say in $L^{2}\left(
\mathbb{R}^{d}\right)  $, is the operator $A:D\left(  A\right)  \subset
L^{2}\left(  \mathbb{R}^{d}\right)  \rightarrow L^{2}\left(  \mathbb{R}%
^{d}\right)  $ defined as $Af=\Delta f$. Fractional powers $\left(
I-A\right)  ^{\beta}$ are well defined for every $\beta\in\mathbb{R}$ and
$\left\Vert \left(  I-A\right)  ^{\alpha/2}f\right\Vert _{L^{2}\left(
\mathbb{R}^{d}\right)  }$ is equivalent to the norm in $W^{\alpha,2}\left(
\mathbb{R}^{d}\right)  $. Recall also that (see \cite{Pa}), for every
$\beta>0$, and given $T>0$, there is a constant $C_{\beta,T}$ such that
\[
\left\Vert \left(  I-A\right)  ^{\beta}e^{tA}\right\Vert _{L^{2}\rightarrow
L^{2}}\leq\frac{C_{\beta,T}}{t^{\beta}}%
\]
for $t\in(0,T]$.

\subsection{Equation for the empirical measure and its mild formulation}

Starting from this section, we drop the suffix $N$ in $X_{t}^{a,N}$, $I^{a,N}%
$, $T_{i}^{a,N}$, $\lambda^{a,N}$, $\mathcal{N}_{t}^{a,N}$ to simplify
notations. Let $\delta$ denote a point outside $\mathbb{R}^{d}$, the so called
grave state, where we assume the processes $X_{t}^{a}$ live when $t\notin
I^{a}$. Hence, whenever a particle proliferates and therefore dies, it stays
forever in the grave state $\delta$. In the sequel, the test functions $\phi$
are assumed to be defined over $\mathbb{R}^{d}\cup\left\{  \delta\right\}  $
and be such that $\phi\left(  \delta\right)  =0$. Using It\^{o} formula over
random time intervals, one can show that $\phi\left(  X_{t}^{a}\right)  $,
with $\phi\in C^{2}\left(  \mathbb{R}^{d}\right)  $, satisfies%
\[
\phi\left(  X_{t}^{a}\right)  =\phi\left(  X_{T_{0}^{a}}^{a}\right)  1_{t\geq
T_{0}^{a}}-\phi\left(  X_{T_{1}^{a}}^{a}\right)  1_{t\geq T_{1}^{a}}+\sqrt
{2}\int_{0}^{t}1_{s\in I^{a}}\nabla\phi\left(  X_{s}^{a}\right)  dB_{s}%
^{a}+\int_{0}^{t}1_{s\in I^{a}}\Delta\phi\left(  X_{s}^{a}\right)  ds.
\]
With a few computations, one can see that the empirical measure $S_{t}^{N}$
satisfies%
\begin{equation}
d\left\langle S_{t}^{N},\phi\right\rangle =\left\langle S_{t}^{N},\Delta
\phi\right\rangle dt+\left\langle \left(  1-h_{t}^{N}\right)  ^{+}S_{t}%
^{N},\phi\right\rangle dt+dM_{t}^{1,\phi,N}+dM_{t}^{2,\phi,N}
\label{SPDE for empirical measure}%
\end{equation}
for every $\phi\in C_{b}^{2}\left(  \mathbb{R}^{d}\right)  $ and where%
\begin{align*}
M_{t}^{1,\phi,N}  &  :=\frac{\sqrt{2}}{N}\sum_{a\in\Lambda^{N}}\int_{0}%
^{t}1_{s\in I^{a}}\nabla\phi\left(  X_{s}^{a}\right)  \cdot\mathrm{d}B_{s}%
^{a},\\
M_{t}^{2,\phi,N}  &  :=\frac{1}{N}\sum_{a\in\Lambda^{N}}\phi\left(
X_{T_{1}^{a}}^{a}\right)  1_{t\geq T_{1}^{a}}-\frac{1}{N}\sum_{a\in\Lambda
^{N}}\int_{0}^{t}\phi\left(  X_{s}^{a}\right)  \lambda_{s}^{a}\mathrm{d}s.
\end{align*}
We deduce that $h_{t}^{N}\left(  x\right)  $ satisfies%
\[
dh_{t}^{N}\left(  x\right)  =\Delta h_{t}^{N}\left(  x\right)  dt+\left(
\theta_{N}\ast\left(  \left(  1-h_{t}^{N}\right)  ^{+}S_{t}^{N}\right)
\right)  \left(  x\right)  dt+dM_{t}^{1,N}\left(  x\right)  +dM_{t}%
^{2,N}\left(  x\right)  ,
\]
where%
\begin{align*}
M_{t}^{1,N}(x)  &  :=-\frac{\sqrt{2}}{N}\sum_{a\in\Lambda^{N}}\int_{0}^{t}%
{1}_{s\in I^{a}}\nabla\theta_{N}(x-X_{s}^{a})\cdot\mathrm{d}B_{s}^{a},\\
M_{t}^{2,N}(x)  &  :=\frac{1}{N}\sum_{a\in\Lambda^{N}}\theta_{N}%
(x-X_{T_{1}^{a}}^{a}){1}_{t\geq T_{1}^{a}}-\frac{1}{N}\int_{0}^{t}\sum
_{a\in\Lambda_{s}^{N}}\theta_{N}(x-X_{s}^{a})\lambda_{s}^{a}\mathrm{d}s\\
&  =\frac{1}{N}\sum_{a\in\Lambda^{N}}\int_{0}^{t}\theta_{N}\left(
x-X_{s-}^{a}\right)  \mathrm{d}\left(  \mathcal{N}_{s}^{a}-\Lambda_{s}%
^{a}\right)  .
\end{align*}
Following a standard procedure, used for instance by \cite{DaPrZab}, we may
rewrite this equation in mild form:%
\begin{equation}
h_{t}^{N}=e^{tA}h_{0}^{N}+\int_{0}^{t}e^{\left(  t-s\right)  A}\left(
\theta_{N}\ast\left(  \left(  1-h_{s}^{N}\right)  ^{+}S_{s}^{N}\right)
\right)  ds+\int_{0}^{t}e^{\left(  t-s\right)  A}dM_{s}^{1,N}+\int_{0}%
^{t}e^{\left(  t-s\right)  A}dM_{s}^{2,N}. \label{mild form}%
\end{equation}
This opens the possibility of a semigroup approach, which is a main novelty of
this paper.

\subsection{Total mass and useful inequalities}

The total relative mass
\[
\left[  S_{t}^{N}\right]  :=S_{t}^{N}\left(  \mathbb{R}^{d}\right)
=\left\langle S_{t}^{N},1\right\rangle =\frac{Card\left(  \Lambda_{t}%
^{N}\right)  }{N}%
\]
plays a central role. Since, in our model, the number of particles may only
increase, we have
\begin{equation}
\left[  S_{t}^{N}\right]  \leq\left[  S_{T}^{N}\right]  \qquad\text{for all
}t\in\left[  0,T\right]  .\label{monotone}%
\end{equation}
The quantity $\left[  S_{T}^{N}\right]  $ is, moreover, exponentially
integrable, uniformly in $N$, see Lemma \ref{lemma total mass} below. We also
repeatedly use the identity%
\begin{equation}
\int_{\mathbb{R}^{d}}h_{t}^{N}(x)\mathrm{d}x=\left[  S_{t}^{N}\right]
,\label{h equal S}%
\end{equation}
which follows from Fubini theorem. Another simple rule of calculus we often
use is%
\begin{equation}
\left\vert \left(  \theta_{N}\ast\left(  fS_{t}^{N}\right)  \right)  \left(
x\right)  \right\vert \leq\left\Vert f\right\Vert _{\infty}h_{t}^{N}\left(
x\right)  \label{rules on f}%
\end{equation}
for every bounded measurable $f:\mathbb{R}^{d}\rightarrow\mathbb{R}$.
Moreover, since $h_{s}^{N}\geq0$, we have
\begin{equation}
\left(  1-h_{s}^{N}\left(  x\right)  \right)  ^{+}\in\left[  0,1\right]
.\label{plus}%
\end{equation}
Finally, we often use the inequality%
\begin{equation}
\frac{1}{N}\int_{\mathbb{R}^{d}}\left\vert \theta_{N}(x)\right\vert ^{2}dx\leq
C,\label{bound on theta N}%
\end{equation}
which holds with a suitable constant $C>0$. Indeed, it holds
\[
\frac{1}{N}\int_{\mathbb{R}^{d}}\left\vert \theta_{N}(x)\right\vert
^{2}dx=\frac{\epsilon_{N}^{-d}}{N}\int_{\mathbb{R}^{d}}\epsilon_{N}%
^{-d}\left\vert \theta\left(  \epsilon_{N}^{-1}x\right)  \right\vert
^{2}dx=\frac{\epsilon_{N}^{-d}}{N}\int_{\mathbb{R}^{d}}\left\vert
\theta\left(  x\right)  \right\vert ^{2}dx.
\]
Inequality (\ref{bound on theta N}) follows from the assumptions $\theta\in
L^{2}(\mathbb{R}^{d})$ and $\sup_{N}\epsilon_{N}^{-d}/N<\infty$.

\section{Main estimates on martingale terms}

Let $\alpha\in(d/2,\alpha_{0})$, as in the statement of Theorem \ref{Thm 1}.

\begin{lemma}
\label{lemma martingale 1} There exists a constant $C^{\prime}>0$ such that
for all $N\in\mathbb{N}$, $t\in\lbrack0,T]$, small $h>0$
\[
\left\Vert \int_{0}^{t}\left(  I-A\right)  ^{\frac{\alpha}{2}}e^{\left(
t+h-s\right)  A}\mathrm{d}M_{s}^{1,N}\right\Vert _{L^{2}\left(  \Omega
\times\mathbb{R}^{d}\right)  }\leq C^{\prime}.
\]

\end{lemma}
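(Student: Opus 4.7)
The plan is to apply It\^o isometry to the stochastic integral, exploit translation invariance of the semigroup to kill the dependence on particle positions, and then control the remaining norm using the scaling of $\theta_N$ combined with the analytic semigroup estimate from Section \ref{subsect Bessel}. Specifically, writing out $M^{1,N}$ as a sum of independent Brownian integrals indexed by particles, It\^o isometry together with Fubini gives
\[
E\!\left[\Bigl\|\int_0^t (I-A)^{\alpha/2} e^{(t+h-s)A} dM_s^{1,N}\Bigr\|_{L^2}^2\right] = \frac{2}{N^2}\, E\!\left[\int_0^t \sum_{a\in\Lambda_s^N}\bigl\|(I-A)^{\alpha/2} e^{(t+h-s)A}\nabla\theta_N(\cdot - X_s^a)\bigr\|_{L^2}^2 ds\right].
\]
Since both $(I-A)^{\alpha/2}$ and $e^{\sigma A}$ commute with translations, the integrand is independent of $X_s^a$, so the sum collapses to $|\Lambda_s^N|=N[S_s^N]$ copies of a single deterministic norm. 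Combined with the monotonicity (\ref{monotone}) and Lemma \ref{lemma total mass}, the total-mass factor contributes a uniform constant, and it suffices to show
\[
\frac{1}{N}\int_0^{t+h}\bigl\|(I-A)^{\alpha/2} e^{\sigma A}\nabla\theta_N\bigr\|_{L^2}^2 d\sigma \leq C
\]
uniformly in $N$, $t$ and small $h$.

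To estimate this, I would move the gradient through the semigroup, dominate $\|\nabla f\|_{L^2}\leq\|(I-A)^{1/2} f\|_{L^2}$, and then split the fractional power as $(\alpha+1)/2=\alpha_0/2+\gamma$ with $\gamma:=(\alpha+1-\alpha_0)/2$. Because $\alpha<\alpha_0$ we have $\gamma<1/2$, so the semigroup bound of Section \ref{subsect Bessel} gives
\[
\bigl\|(I-A)^{(\alpha+1)/2} e^{\sigma A}\theta_N\bigr\|_{L^2} \leq \frac{C_{\gamma,T}}{\sigma^{\gamma}}\bigl\|(I-A)^{\alpha_0/2}\theta_N\bigr\|_{L^2},
\]
and $\int_0^{T+1}\sigma^{-2\gamma}d\sigma<\infty$ exactly because $2\gamma<1$. (If $\alpha+1\leq\alpha_0$ one bypasses the singular factor entirely.)

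The closing step is a Fourier-side scaling computation for $\theta_N(x)=\epsilon_N^{-d}\theta(\epsilon_N^{-1}x)$, which yields $\|(I-A)^{\alpha_0/2}\theta_N\|_{L^2}^2 \leq C\,\epsilon_N^{-d-2\alpha_0}\|\theta\|_{W^{\alpha_0,2}}^2$. With $\epsilon_N=N^{-\beta/d}$, the required uniform bound $N^{-1}\epsilon_N^{-d-2\alpha_0}\leq C$ rewrites as $\beta+2\alpha_0\beta/d\leq 1$, i.e., $\alpha_0\leq d(1-\beta)/(2\beta)$, which is precisely assumption (\ref{assumption on theta}).

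The main obstacle, and indeed the reason the proof needs the careful assumption on $\alpha_0$, is the balance in the last step: the gain $N^{-1}$ coming from It\^o isometry must absorb the blow-up of $\|\theta_N\|_{W^{\alpha_0,2}}$ under the mollifier scaling, and the two match exactly at the boundary of (\ref{assumption on theta}). The sub-critical condition $\beta<1/2$ enters twice: via $\alpha_0>d/2$ being compatible with $\alpha_0\leq d(1-\beta)/(2\beta)$, and via $\gamma<1/2$ making the singular time integral convergent. Everything else is essentially bookkeeping.
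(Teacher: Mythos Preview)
Your proof is correct and follows essentially the same route as the paper: It\^o isometry, translation invariance to kill the particle-position dependence, the total-mass bound, and then a splitting of the fractional power between the analytic semigroup (absorbing a singularity just under $(t-s)^{-1}$ after squaring) and the Sobolev norm of $\theta_N$, closed by the scaling estimate of Lemma~\ref{lemma interpolation} and assumption~(\ref{assumption on theta}). The only cosmetic difference is that you place the full $\alpha_0$ derivatives on $\theta_N$ and use the gap $\alpha_0-\alpha$ to make the time singularity integrable, whereas the paper introduces an auxiliary small $\varepsilon$ with $\alpha+\varepsilon/2\leq\alpha_0$; the two splittings are equivalent (your implicit $\varepsilon$ is $2(\alpha_0-\alpha)$).
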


\begin{proof}%
\begin{align*}
&  \left\Vert \int_{0}^{t}\left(  I-A\right)  ^{\alpha/2}e^{\left(
t+h-s\right)  A}dM_{s}^{1,N}\right\Vert _{L^{2}\left(  \Omega\times
\mathbb{R}^{d}\right)  }^{2}\\
&  =\frac{2}{N^{2}}\int_{\mathbb{R}^{d}}E\left[  \left\vert \sum_{a\in
\Lambda^{N}}\int_{0}^{t}\left(  \left(  I-A\right)  ^{\alpha/2}e^{\left(
t+h-s\right)  A}{1}_{s\in I^{a}}\nabla\theta_{N}(\cdot-X_{s}^{a})\right)
\left(  x\right)  \cdot\mathrm{d}B_{s}^{a}\right\vert ^{2}\right]  dx\\
&  =\frac{2}{N^{2}}\int_{\mathbb{R}^{d}}E\left[  \sum_{a\in\Lambda^{N}}%
\int_{0}^{t}\left\vert \left(  \left(  I-A\right)  ^{\alpha/2}e^{\left(
t+h-s\right)  A}{1}_{s\in I^{a}}\nabla\theta_{N}(\cdot-X_{s}^{a})\right)
\left(  x\right)  \right\vert ^{2}ds\right]  dx\\
&  =\frac{2}{N^{2}}E\left[  \sum_{a\in\Lambda^{N}}\int_{0}^{t}{1}_{s\in I^{a}%
}\left(  \int_{\mathbb{R}^{d}}\left\vert \left(  \left(  I-A\right)
^{\alpha/2}e^{\left(  t+h-s\right)  A}\nabla\theta_{N}(\cdot-X_{s}%
^{a})\right)  \left(  x\right)  \right\vert ^{2}dx\right)  ds\right]  .
\end{align*}
We have%
\[
\left(  \left(  I-A\right)  ^{\alpha/2}e^{\left(  t+h-s\right)  A}\nabla
\theta_{N}(\cdot-X_{s}^{a})\right)  \left(  x\right)  =\left(  \left(
I-A\right)  ^{\alpha/2}e^{\left(  t+h-s\right)  A}\nabla\theta_{N})\right)
\left(  x-X_{s}^{a}\right)  .
\]
Then, by change of variable,
\begin{align*}
&  \int_{\mathbb{R}^{d}}\left\vert \left(  \left(  I-A\right)  ^{\alpha
/2}e^{\left(  t+h-s\right)  A}\nabla\theta_{N}(\cdot-X_{s}^{a})\right)
\left(  x\right)  \right\vert ^{2}dx\\
&  =\int_{\mathbb{R}^{d}}\left\vert \left(  \left(  I-A\right)  ^{\alpha
/2}e^{\left(  t+h-s\right)  A}\nabla\theta_{N}\right)  \left(  x\right)
\right\vert ^{2}dx.
\end{align*}
Therefore, since $\frac{1}{N}\sum_{a\in\Lambda^{N}}{1}_{s\in I^{a}}=\left[
S_{s}^{N}\right]  \leq\left[  S_{T}^{N}\right]  $,
\begin{align*}
&  \left\Vert \int_{0}^{t}\left(  I-A\right)  ^{\alpha/2}e^{\left(
t+h-s\right)  A}dM_{s}^{1,N}\right\Vert _{L^{2}\left(  \Omega\times
\mathbb{R}^{d}\right)  }^{2}\\
&  =\frac{2}{N}E\left[  \int_{0}^{t}\left(  \frac{1}{N}\sum_{a\in\Lambda^{N}%
}{1}_{s\in I^{a}}\right)  \left(  \int_{\mathbb{R}^{d}}\left\vert \left(
\left(  I-A\right)  ^{\alpha/2}e^{\left(  t+h-s\right)  A}\nabla\theta
_{N}\right)  \left(  x\right)  \right\vert ^{2}dx\right)  ds\right] \\
&  \leq\frac{2}{N}E\left(  \left[  S_{T}^{N}\right]  \right)  \int_{0}%
^{t}\left\Vert \left(  I-A\right)  ^{\alpha/2}e^{\left(  t+h-s\right)
A}\nabla\theta_{N}\right\Vert _{L^{2}}^{2}ds.
\end{align*}
From assumption (\ref{assumption on theta}) and the condition $\alpha
\in(d/2,\alpha_{0})$, we have $\frac{\beta}{d}\left(  2\alpha+d\right)  <1$,
hence there exists a small $\varepsilon>0$ such that $\frac{\beta}{d}\left(
2\alpha+\varepsilon+d\right)  \leq1$ and at the same time $\alpha
+\frac{\varepsilon}{2}\leq\alpha_{0}$. Denoting by $C>0$ any constant
independent of $N$ and recalling that $\epsilon_{N}=N^{-\frac{\beta}{d}}$, we
have%
\begin{align*}
&  \leq\frac{C}{N}\int_{0}^{t}\left\Vert \left(  I-A\right)  ^{\left(
1-\varepsilon/2\right)  /2}e^{\left(  t-s\right)  A}\right\Vert _{L^{2}%
\rightarrow L^{2}}^{2}\left\Vert \nabla\left(  I-A\right)  ^{-1/2}\right\Vert
_{L^{2}\rightarrow L^{2}}^{2}\left\Vert \left(  I-A\right)  ^{\left(
\alpha+\varepsilon/2\right)  /2}e^{hA}\theta_{N}\right\Vert _{L^{2}}^{2}ds\\
&  \leq\frac{C}{N}\left\Vert \theta_{N}\right\Vert _{W^{\alpha+\varepsilon
/2,2}}^{2}\int_{0}^{t}\frac{1}{\left(  t-s\right)  ^{1-\varepsilon/2}}ds\leq
C\frac{\epsilon_{N}^{-2\alpha-\varepsilon-d}}{N}\leq C
\end{align*}
where we have used Lemma \ref{lemma interpolation} below.
\end{proof}


\begin{lemma}
\label{lemma martingale 2} There exists a constant $C>0$ such that for all
$N\in\mathbb{N}$, $t\in\lbrack0,T]$, small $h>0$
\[
\left\Vert \int_{0}^{t}\left(  I-A\right)  ^{\frac{\alpha}{2}}e^{\left(
t+h-s\right)  A}\mathrm{d}M_{s}^{2,N}\right\Vert _{L^{2}\left(  \Omega
\times\mathbb{R}^{d}\right)  }\leq C.
\]

\end{lemma}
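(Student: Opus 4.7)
The plan is to adapt the argument of Lemma \ref{lemma martingale 1}, with the Brownian It\^o isometry replaced by the $L^2$ isometry for stochastic integrals against compensated Poisson measures. Starting from
\[
\int_0^t (I-A)^{\alpha/2} e^{(t+h-s)A} \mathrm{d}M_s^{2,N}(x) = \frac{1}{N} \sum_{a \in \Lambda^N} \int_0^t \bigl((I-A)^{\alpha/2} e^{(t+h-s)A} \theta_N\bigr)(x - X_{s-}^a) \, \mathrm{d}\bigl(\mathcal{N}_s^a - \Lambda_s^a\bigr),
\]
the independence of the driving Poisson processes $\mathcal{N}^{0,a}$ (their jumps a.s.\ never coincide) yields orthogonality of the per-particle martingales, so cross terms vanish upon squaring. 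The Poisson isometry, with compensator $\mathrm{d}\Lambda_s^a = \lambda_s^a \mathbf{1}_{s \in I^a} \mathrm{d}s$, combined with Fubini and translation invariance of the $L^2(\mathbb{R}^d)$ norm, then produces
\[
\left\| \int_0^t (I-A)^{\alpha/2} e^{(t+h-s)A} \mathrm{d}M_s^{2,N} \right\|_{L^2(\Omega \times \mathbb{R}^d)}^2 = \frac{1}{N^2} E\left[ \sum_{a \in \Lambda^N} \int_0^t \lambda_s^a \mathbf{1}_{s \in I^a} \bigl\| (I-A)^{\alpha/2} e^{(t+h-s)A} \theta_N \bigr\|_{L^2}^2 \mathrm{d}s \right].
\]
Pulling the deterministic spatial norm outside, using $\lambda_s^a \leq 1$ from \eqref{plus}, the identity $\frac{1}{N} \sum_{a \in \Lambda^N} \mathbf{1}_{s \in I^a} = [S_s^N] \leq [S_T^N]$, and the uniform-in-$N$ integrability of $[S_T^N]$ from Lemma \ref{lemma total mass}, this bound reduces to
\[
\frac{C}{N} \int_0^t \bigl\| (I-A)^{\alpha/2} e^{(t+h-s)A} \theta_N \bigr\|_{L^2}^2 \mathrm{d}s.
\]

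To close the estimate I would split $(I-A)^{\alpha/2} = (I-A)^{(1-\varepsilon/2)/2} (I-A)^{(\alpha - 1 + \varepsilon/2)/2}$ and invoke the analytic semigroup estimate from Section \ref{subsect Bessel} (plus commutation of $e^{hA}$ with fractional powers), producing the integrable time singularity $(t-s)^{-(1-\varepsilon/2)}$ and leaving $\| (I-A)^{(\alpha - 1 + \varepsilon/2)/2} e^{hA} \theta_N \|_{L^2}^2$ in space. Absent the gradient factor present in Lemma \ref{lemma martingale 1}, the required Sobolev index on $\theta_N$ is now only $\alpha - 1 + \varepsilon/2$, one derivative less than there; hence the very scaling condition $\beta(2\alpha + d + \varepsilon) \leq d$ already exploited in Lemma \ref{lemma martingale 1}, together with $\|\theta_N\|_{W^{s,2}}^2 \leq C \epsilon_N^{-d - 2s}$ and $\epsilon_N = N^{-\beta/d}$ (via Lemma \ref{lemma interpolation}), suffices to absorb the $1/N$ prefactor.

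The main new ingredient compared to Lemma \ref{lemma martingale 1} is the opening isometry step: one needs the Poisson $L^2$ isometry together with orthogonality of compensated Poisson integrals across the countably infinite label set $\Lambda^N$. This is not a serious obstacle: on $[0, T]$ only finitely many particles ever contribute almost surely (their total count is bounded by $N[S_T^N]$, with finite expectation by Lemma \ref{lemma total mass}), so after truncation to finite generations the classical Poisson isometry applies directly and a monotone convergence argument closes the limit.
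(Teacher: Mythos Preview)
Your proposal is correct and mirrors the paper's argument: orthogonality of the per-particle compensated Poisson martingales, the Poisson $L^2$ isometry, translation invariance, the bound $\lambda_s^a\leq 1$, and the total-mass control reduce everything to the same time-integrated semigroup estimate on $\theta_N$ already handled in Lemma~\ref{lemma martingale 1}. The paper's own proof actually carries a spurious $\nabla$ on $\theta_N$ throughout (an apparent copy-paste slip from Lemma~\ref{lemma martingale 1}) and then simply invokes that lemma's final bound verbatim; your version correctly omits the gradient and observes that the resulting estimate requires one derivative fewer on $\theta_N$, which is a fortiori covered by the scaling condition already used.
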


\begin{proof}
Since%
\[
M_{t}^{2,N}=\frac{1}{N}\sum_{a\in\Lambda^{N}}\int_{0}^{t}\theta_{N}\left(
x-X_{s-}^{a}\right)  \mathrm{d}\left(  \mathcal{N}_{s}^{a}-\Lambda_{s}%
^{a}\right)
\]
we have%
\begin{align*}
&  \left\Vert \int_{0}^{t}\left(  I-A\right)  ^{\alpha/2}e^{\left(
t+h-s\right)  A}dM_{s}^{2,N}\right\Vert _{L^{2}\left(  \Omega\times
\mathbb{R}^{d}\right)  }^{2}\\
&  =\frac{1}{N^{2}}\int_{\mathbb{R}^{d}}E\left[  \left\vert \sum_{a\in
\Lambda^{N}}\int_{0}^{t}\left(  \left(  I-A\right)  ^{\alpha/2}e^{\left(
t+h-s\right)  A}{1}_{s\in I^{a}}\nabla\theta_{N}(\cdot-X_{s-}^{a})\right)
\left(  x\right)  \cdot\mathrm{d}\left(  \mathcal{N}_{s}^{a}-\Lambda_{s}%
^{a}\right)  \right\vert ^{2}\right]  dx.
\end{align*}
Write $g_{t,s,h}^{a,N}\left(  X_{s-}^{a}\right)  $ for $\left(  \left(
I-A\right)  ^{\alpha/2}e^{\left(  t+h-s\right)  A}{1}_{s\in I^{a}}\nabla
\theta_{N}(\cdot-X_{s-}^{a})\right)  \left(  x\right)  $. Since the jumps of
$\mathcal{N}_{s}^{a}$ and $\mathcal{N}_{s}^{a^{\prime}}$, for $a\neq
a^{\prime}$, never occur at the same time, we have%
\[
E\left[  \left(  \int_{0}^{t}g_{t,s,h}^{a,N}\left(  X_{s-}^{a}\right)
\mathrm{d}\left(  \mathcal{N}_{s}^{a}-\Lambda_{s}^{a}\right)  \right)  \left(
\int_{0}^{t}g_{t,s,h}^{a,N}\left(  X_{s-}^{a^{\prime}}\right)  \mathrm{d}%
\left(  \mathcal{N}_{s}^{a^{\prime}}-\Lambda_{s}^{a^{\prime},N}\right)
\right)  \right]  =0.
\]
Hence the last expression is equal to%
\[
=\frac{1}{N^{2}}\sum_{a\in\Lambda^{N}}\int_{\mathbb{R}^{d}}E\left[  \left\vert
\int_{0}^{t}g_{t,s,h}^{a,N}\left(  X_{s-}^{a}\right)  \cdot\mathrm{d}\left(
\mathcal{N}_{s}^{a}-\Lambda_{s}^{a}\right)  \right\vert ^{2}\right]  dx.
\]
It is known that%
\[
E\left[  \left\vert \int_{0}^{t}g_{t,s,h}^{a,N}\left(  X_{s-}^{a}\right)
\mathrm{d}\left(  \mathcal{N}_{s}^{a}-\Lambda_{s}^{a}\right)  \right\vert
^{2}\right]  =E\left[  \int_{0}^{t}\left\vert g_{t,s,h}^{a,N}\left(  X_{s}%
^{a}\right)  \right\vert ^{2}\mathrm{d}\Lambda_{s}^{a}\right]  .
\]
Therefore, the last expression simplifies to%
\begin{align*}
&  =\frac{1}{N^{2}}\sum_{a\in\Lambda^{N}}\int_{\mathbb{R}^{d}}E\left[
\int_{0}^{t}\left\vert g_{t,s,h}^{a,N}\left(  X_{s}^{a}\right)  \right\vert
^{2}\lambda_{s}^{a}\mathrm{d}s\right]  dx\\
&  =\frac{1}{N^{2}}\sum_{a\in\Lambda^{N}}E\left[  \int_{0}^{t}\left(
\int_{\mathbb{R}^{d}}\left\vert \left(  \left(  I-A\right)  ^{\alpha
/2}e^{\left(  t+h-s\right)  A}{1}_{s\in I^{a}}\nabla\theta_{N}(\cdot-X_{s}%
^{a})\right)  \left(  x\right)  \right\vert ^{2}dx\right)  \lambda_{s}%
^{a}\mathrm{d}s\right]  .
\end{align*}
As in the previous proof, and taking into account the boundedness of
$\lambda_{s}^{a}$ (by definition),
\begin{align*}
&  =\frac{1}{N^{2}}\sum_{a\in\Lambda^{N}}E\left[  \int_{0}^{t}\left(
\int_{\mathbb{R}^{d}}\left\vert \left(  \left(  I-A\right)  ^{\alpha
/2}e^{\left(  t+h-s\right)  A}{1}_{s\in I^{a}}\nabla\theta_{N}\right)  \left(
x\right)  \right\vert ^{2}dx\right)  \lambda_{s}^{a}\mathrm{d}s\right] \\
&  \leq\frac{1}{N}E\left[  \int_{0}^{t}\left(  \frac{1}{N}\sum_{a\in
\Lambda^{N}}{1}_{s\in I^{a}}\right)  \left\Vert \left(  I-A\right)
^{\alpha/2}e^{\left(  t+h-s\right)  A}\nabla\theta_{N}\right\Vert _{L^{2}}%
^{2}\mathrm{d}s\right] \\
&  \leq\frac{1}{N}E\left(  \left[  S_{T}^{N}\right]  \right)  \int_{0}%
^{t}\left\Vert \left(  I-A\right)  ^{\alpha/2}e^{\left(  t+h-s\right)
A}\nabla\theta_{N}\right\Vert _{L^{2}}^{2}ds.
\end{align*}
This is the same expression as in the previous proof, which is bounded by a
constant, uniformly in $N$.
\end{proof}

\section{Main estimate on $h_{t}^{N}$\label{section main estimate}}

As described above, we need an estimate on $h_{t}^{N}$ in a H\"{o}lder norm
(in space) which we gain by Sobolev embedding theorem. Since we work in an
$L^{2}$-setting (computations not reported here in the $L^{p}$ setting do not
help since they re-introduce difficulties from other sides), we have%
\[
W^{\alpha,2}\left(  \mathbb{R}^{d}\right)  \subset C_{b}^{\varepsilon}\left(
\mathbb{R}^{d}\right)  \qquad\text{if }\left(  \alpha-\varepsilon\right)
2\geq d.
\]
This is the reason for the restriction on $\alpha$, namely $2\alpha>d$. Recall
that $\alpha_{0}$ and $\rho_{0}$ were introduced in (\ref{assumption on theta}%
) and \emph{(\ref{initial cond}) respectively.}

\begin{lemma}
\label{lemma p equal 2} Assume $\alpha\in(d/2,\alpha_{0})$. Then there exist
constants $C,C^{\prime}>0$ such that for all $N\in\mathbb{N},t\in(0,T]$
\[
\left\Vert h_{t}^{N}\right\Vert _{L^{2}\left(  \Omega;W^{\alpha,2}\left(
\mathbb{R}^{d}\right)  \right)  }\leq C\mathbb{E}\left[  \left\Vert \left(
I-A\right)  ^{\frac{\alpha}{2}}h_{t}^{N}\right\Vert _{L^{2}\left(
\mathbb{R}^{d}\right)  }^{2}\right]  ^{1/2}\leq C^{\prime}\left(  1+\frac
{1}{t^{\frac{\left(  \alpha-\rho_{0}\right)  \vee0}{2}}}\right)  .
\]

\end{lemma}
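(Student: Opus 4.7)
My approach is to start from the mild formulation (\ref{mild form}), apply $(I-A)^{\alpha/2}$ to both sides, and take $L^{2}(\Omega\times\mathbb{R}^{d})$ norms. The triangle inequality together with the equivalence between $\|\cdot\|_{W^{\alpha,2}}$ and $\|(I-A)^{\alpha/2}\cdot\|_{L^{2}}$ recalled in Section \ref{subsect Bessel} reduce the problem to bounding four contributions: the smoothed initial datum, the nonlinear drift, and the two stochastic integrals.

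For the initial datum I would write
$$
(I-A)^{\alpha/2}e^{tA}h_{0}^{N}=\bigl[(I-A)^{(\alpha-\rho_{0})/2}e^{tA}\bigr]\,(I-A)^{\rho_{0}/2}h_{0}^{N}.
$$
The analytic-semigroup estimate of Section \ref{subsect Bessel} bounds the operator in brackets by $C(1\vee t^{-(\alpha-\rho_{0})/2})$, while assumption (\ref{initial cond}) controls the remaining factor in $L^{2}(\Omega\times\mathbb{R}^{d})$ uniformly in $N$; this produces the singular prefactor $1+t^{-(\alpha-\rho_{0})\vee 0/2}$. For the two martingale contributions I would invoke Lemma \ref{lemma martingale 1} and Lemma \ref{lemma martingale 2} with $h=0$, each of which delivers a uniform constant.

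The drift is the central term. The pointwise inequality $|\theta_{N}\ast((1-h_{s}^{N})^{+}S_{s}^{N})(x)|\leq h_{s}^{N}(x)$, which follows from (\ref{rules on f}) combined with (\ref{plus}), yields $\|\theta_{N}\ast((1-h_{s}^{N})^{+}S_{s}^{N})\|_{L^{2}}\leq\|h_{s}^{N}\|_{L^{2}}$, and together with the smoothing estimate $\|(I-A)^{\alpha/2}e^{(t-s)A}\|_{L^{2}\rightarrow L^{2}}\leq C(t-s)^{-\alpha/2}$ it produces a Volterra-type bound
$$
\int_{0}^{t}C(t-s)^{-\alpha/2}\,E\bigl[\|h_{s}^{N}\|_{L^{2}}^{2}\bigr]^{1/2}\,ds.
$$
I would first establish uniform boundedness of $E[\|h_{s}^{N}\|_{L^{2}}^{2}]^{1/2}$ by running the same decomposition with $\alpha$ replaced by $0$ (so that no time singularity appears in the drift) and closing via classical Gr\"{o}nwall, using (\ref{initial cond}); the displayed Volterra integral is then finite and contributes $O(1)$.

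The main obstacle will be the case $\alpha\geq 2$, which can occur in high dimension because (\ref{assumption on theta}) allows $\alpha_{0}$ as large as $d(1-\beta)/(2\beta)$. In that regime the kernel $(t-s)^{-\alpha/2}$ is not integrable at $s=t$ and the Volterra closure just sketched fails. The remedy I would pursue is a bootstrap along an increasing chain $\rho_{0}=\gamma_{0}<\gamma_{1}<\cdots<\gamma_{n}=\alpha$ of Sobolev exponents with consecutive increments strictly below $2$, decomposing at each step
$$
\theta_{N}\ast\bigl((1-h_{s}^{N})^{+}S_{s}^{N}\bigr)=(1-h_{s}^{N})^{+}h_{s}^{N}+R_{s}^{N},
$$
controlling the product factor via the embedding $W^{\gamma_{j},2}\hookrightarrow L^{\infty}$ (available once $\gamma_{j}>d/2$) and handling the commutator $R_{s}^{N}$ through the smoothness of $\theta_{N}$; a generalized Gr\"{o}nwall inequality at each stage absorbs the mildly singular convolution kernel and upgrades the regularity to the next level.
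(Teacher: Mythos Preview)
Your treatment of the initial datum and of the two martingale terms matches the paper's, but your handling of the nonlinear drift takes a genuinely different route and runs into a real obstacle when $\alpha\geq 2$.

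The paper does \emph{not} estimate the drift through the singular kernel $(t-s)^{-\alpha/2}$. Instead it inserts a regularising factor $e^{hA}$ (with $h>0$ small) throughout the mild formulation and invokes Lemma~\ref{lemma positive}, which asserts that the operator $f\mapsto(I-A)^{\alpha/2}e^{hA}f$ is \emph{positivity preserving} on $L^{2}(\mathbb{R}^{d})$. From the pointwise inequality $0\leq\theta_{N}\ast\bigl((1-h_{s}^{N})^{+}S_{s}^{N}\bigr)\leq h_{s}^{N}$ one then gets
\[
0\;\leq\;(I-A)^{\alpha/2}e^{hA}\bigl[\theta_{N}\ast\bigl((1-h_{s}^{N})^{+}S_{s}^{N}\bigr)\bigr]\;\leq\;(I-A)^{\alpha/2}e^{hA}h_{s}^{N},
\]
and after splitting $e^{(t+h-s)A}=e^{(t-s)A}e^{hA}$ and using contractivity of $e^{(t-s)A}$, the drift contributes $\int_{0}^{t}\bigl\|(I-A)^{\alpha/2}e^{hA}h_{s}^{N}\bigr\|_{L^{2}(\Omega\times\mathbb{R}^{d})}\,ds$. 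This is a Gr\"onwall inequality with \emph{bounded} kernel in the very quantity one is estimating; it closes directly for every $\alpha$, and one passes to the limit $h\downarrow 0$ at the end. No bootstrap and no restriction $\alpha<2$ are needed.

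Your bootstrap for $\alpha\geq 2$, by contrast, has two concrete gaps. First, the map $u\mapsto u(1-u)^{+}$ is only Lipschitz, not $C^{1}$; composition with it does not preserve $W^{\gamma,2}$ regularity for $\gamma>1$, so you cannot push the product $(1-h_{s}^{N})^{+}h_{s}^{N}$ up the chain of Sobolev exponents your scheme requires. Second, the ``commutator'' $R_{s}^{N}=\theta_{N}\ast\bigl((1-h_{s}^{N})^{+}S_{s}^{N}\bigr)-(1-h_{s}^{N})^{+}h_{s}^{N}$ still contains the empirical measure $S_{s}^{N}$, a sum of Dirac masses; bounding it in positive-order Sobolev norms uniformly in $N$ would reintroduce negative powers of $\epsilon_{N}$ that the argument cannot absorb. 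The positivity trick of Lemma~\ref{lemma positive} bypasses both difficulties entirely.
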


\begin{proof}
The first inequality follows from the fact that the two norms
\[
\left\Vert \cdot\right\Vert _{W^{\alpha,2}(\mathbb{R}^{d})}\text{ and
}\left\Vert \left(  I-A\right)  ^{\frac{\alpha}{2}}\cdot\right\Vert
_{L^{2}(\mathbb{R}^{d})}%
\]
are equivalent. From the mild formulation (\ref{mild form}) we have%
\begin{align*}
&  \left\Vert \left(  I-A\right)  ^{\alpha/2}e^{hA}h_{t}^{N}\right\Vert
_{L^{2}\left(  \Omega\times\mathbb{R}^{d}\right)  }\\
&  \leq\left\Vert \left(  I-A\right)  ^{\alpha/2}e^{(t+h)A}h_{0}%
^{N}\right\Vert _{L^{2}\left(  \Omega\times\mathbb{R}^{d}\right)  }\\
&  +\int_{0}^{t}\left\Vert \left(  I-A\right)  ^{\alpha/2}e^{\left(
t+h-s\right)  A}\left(  \theta_{N}\ast\left(  \left(  1-h_{s}^{N}\right)
^{+}S_{s}^{N}\right)  \right)  \right\Vert _{L^{2}\left(  \Omega
\times\mathbb{R}^{d}\right)  }ds\\
&  +\left\Vert \int_{0}^{t}\left(  I-A\right)  ^{\alpha/2}e^{\left(
t+h-s\right)  A}dM_{s}^{1,N}\right\Vert _{L^{2}\left(  \Omega\times
\mathbb{R}^{d}\right)  }+\left\Vert \int_{0}^{t}\left(  I-A\right)
^{\alpha/2}e^{\left(  t+h-s\right)  A}dM_{s}^{2,N}\right\Vert _{L^{2}\left(
\Omega\times\mathbb{R}^{d}\right)  }.
\end{align*}
The last two terms are bounded by a constant, by Lemmata
\ref{lemma martingale 1} and \ref{lemma martingale 2}. For the first term,
where $C>0$ is a constant that may change from instance to instance, we have
\begin{align*}
&  \left\Vert \left(  I-A\right)  ^{\alpha/2}e^{(t+h)A}h_{0}^{N}\right\Vert
_{L^{2}\left(  \Omega\times\mathbb{R}^{d}\right)  }\\
&  \leq\left\Vert \left(  I-A\right)  ^{\left(  \alpha-\rho_{0}\right)
/2}e^{(t+h)A}\right\Vert _{L^{2}\left(  \mathbb{R}^{d}\right)  \rightarrow
L^{2}\left(  \mathbb{R}^{d}\right)  }\left\Vert \left(  I-A\right)  ^{\rho
_{0}/2}h_{0}^{N}\right\Vert _{L^{2}\left(  \Omega\times\mathbb{R}^{d}\right)
}\\
&  \leq\frac{C}{(t+h)^{\frac{\left(  \alpha-\rho_{0}\right)  \vee0}{2}}}%
\end{align*}
where we have used assumption (\ref{initial cond}). About the second one,
\begin{align*}
&  \int_{0}^{t}\left\Vert \left(  I-A\right)  ^{\alpha/2}e^{\left(
t+h-s\right)  A}\left(  \theta_{N}\ast\left(  \left(  1-h_{s}^{N}\right)
^{+}S_{s}^{N}\right)  \right)  \right\Vert _{L^{2}\left(  \Omega
\times\mathbb{R}^{d}\right)  }ds\\
&  \leq\int_{0}^{t}\left\Vert e^{\left(  t-s\right)  A}\right\Vert
_{L^{2}\left(  \mathbb{R}^{d}\right)  \rightarrow L^{2}\left(  \mathbb{R}%
^{d}\right)  }\left\Vert \left(  I-A\right)  ^{\alpha/2}e^{hA}\left(
\theta_{N}\ast\left(  \left(  1-h_{s}^{N}\right)  ^{+}S_{s}^{N}\right)
\right)  \right\Vert _{L^{2}\left(  \Omega\times\mathbb{R}^{d}\right)  }ds.
\end{align*}
Since the operator $f\mapsto\left(  I-A\right)  ^{\alpha/2}e^{hA}f$ is
positive on $L^{2}\left(  \mathbb{R}^{d}\right)  $, see Lemma
\ref{lemma positive}, it holds $\left(  I-A\right)  ^{\alpha/2}e^{hA}%
f\leq\left(  I-A\right)  ^{\alpha/2}e^{hA}g$ if $f\leq g$. Because of
\[
0\leq\left(  \theta_{N}\ast\left(  \left(  1-h_{s}^{N}\right)  ^{+}S_{s}%
^{N}\right)  \right)  \left(  x\right)  \leq h_{s}^{N}\left(  x\right)  ,
\]
we deduce
\[
0\leq\left(  I-A\right)  ^{\alpha/2}e^{hA}\left(  \theta_{N}\ast\left(
\left(  1-h_{s}^{N}\right)  ^{+}S_{s}^{N}\right)  \right)  \leq\left(
I-A\right)  ^{\alpha/2}e^{hA}h_{s}^{N}.
\]
Hence,
\begin{align*}
&  \int_{0}^{t}\left\Vert \left(  I-A\right)  ^{\alpha/2}e^{\left(
t+h-s\right)  A}\left(  \theta_{N}\ast\left(  \left(  1-h_{s}^{N}\right)
^{+}S_{s}^{N}\right)  \right)  \right\Vert _{L^{2}\left(  \Omega
\times\mathbb{R}^{d}\right)  }ds\\
&  \leq C\int_{0}^{t}\left\Vert \left(  I-A\right)  ^{\alpha/2}e^{hA}h_{s}%
^{N}\right\Vert _{L^{2}\left(  \Omega\times\mathbb{R}^{d}\right)  }ds.
\end{align*}
Until now we have proved%
\[
\left\Vert \left(  I-A\right)  ^{\alpha/2}e^{hA}h_{t}^{N}\right\Vert
_{L^{2}\left(  \Omega\times\mathbb{R}^{d}\right)  }\leq\frac{C}{(t+h)^{\frac
{\left(  \alpha-\rho_{0}\right)  \vee0}{2}}}+C\int_{0}^{t}\left\Vert \left(
I-A\right)  ^{\alpha/2}e^{hA}h_{s}^{N}\right\Vert _{L^{2}\left(  \Omega
\times\mathbb{R}^{d}\right)  }ds+C.
\]
By Gronwall's lemma we deduce
\[
\left\Vert \left(  I-A\right)  ^{\alpha/2}e^{hA}h_{t}^{N}\right\Vert
_{L^{2}\left(  \Omega\times\mathbb{R}^{d}\right)  }\leq\frac{C}{(t+h)^{\frac
{\left(  \alpha-\rho_{0}\right)  \vee0}{2}}}+C.
\]
We may now take the limit as $h\rightarrow0$. The proof is complete.
\end{proof}

\begin{remark}
The result is true also for $\alpha=0$:\
\begin{equation}
\sup_{t\in\left[  0,T\right]  }E\left[  \left\Vert h_{t}^{N}\right\Vert
_{L^{2}\left(  \mathbb{R}^{d}\right)  }^{2}\right]  \leq C. \label{L2 est}%
\end{equation}

\end{remark}

\section{Other estimates on $h_{t}^{N}$\label{section other estimates}}

In order to show tightness of the family of the functions $\{h^{N}\}_{N}$, in
addition to the previous bound which shows a regularity in space, we also need
a regularity in time. See the compactness criteria below.

\begin{lemma}
\label{lemma fractional Sobolev}Given any $\gamma\in(0,1/2)$, it holds
\[
\lim_{R\rightarrow\infty}\sup_{N\in\mathbb{N}}P\left(  \int_{0}^{T}\int%
_{0}^{T}\frac{\left\Vert h_{t}^{N}-h_{s}^{N}\right\Vert _{W^{-2,2}}^{2}%
}{|t-s|^{1+2\gamma}}\mathrm{d}s\mathrm{d}t>R\right)  =0.
\]

\end{lemma}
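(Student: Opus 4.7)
The plan is to apply Markov's inequality, reducing the claim to a uniform-in-$N$ expectation bound. Setting
$$J_N := \int_0^T \int_0^T \frac{\|h_t^N - h_s^N\|_{W^{-2,2}}^2}{|t-s|^{1+2\gamma}}\, ds\, dt,$$
it suffices to prove $\sup_N E[J_N] < \infty$. By Fubini this follows as soon as
$$E\bigl[\|h_t^N - h_s^N\|_{W^{-2,2}}^2\bigr] \leq C|t-s|, \qquad 0 \leq s \leq t \leq T,$$
uniformly in $N$: the double integral is then dominated by $C\int_0^T\!\!\int_0^T |t-s|^{-2\gamma}\,ds\,dt$, which is finite precisely because $\gamma<1/2$.

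To establish this increment bound I would use the SPDE for $h^N$ preceding (\ref{mild form}),
$$h_t^N - h_s^N = \int_s^t \Delta h_r^N \, dr + \int_s^t \theta_N * \bigl((1-h_r^N)^+ S_r^N\bigr) \, dr + (M_t^{1,N} - M_s^{1,N}) + (M_t^{2,N} - M_s^{2,N}),$$
and estimate each of the four contributions in $W^{-2,2}$. For the drift, since $(I-A)^{-1}\Delta = -I + (I-A)^{-1}$ is bounded on $L^2$ one has $\|\Delta h_r^N\|_{W^{-2,2}} \leq 2\|h_r^N\|_{L^2}$, while by (\ref{rules on f})--(\ref{plus}) and the continuous embedding $L^2\subset W^{-2,2}$,
$$\bigl\|\theta_N * \bigl((1-h_r^N)^+ S_r^N\bigr)\bigr\|_{W^{-2,2}} \leq \|h_r^N\|_{L^2}.$$
Cauchy--Schwarz in time together with the uniform $L^2$ bound (\ref{L2 est}) then yields a drift contribution of order $(t-s)^2$ in mean-square $W^{-2,2}$-norm, which is dominated by $T(t-s)$.

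For the martingales I would apply the It\^o isometry to $(I-A)^{-1}(M^{i,N}_t - M^{i,N}_s)$, repeating the computations of Lemmata \ref{lemma martingale 1} and \ref{lemma martingale 2} but with the operator $(I-A)^{-1}$ replacing $(I-A)^{\alpha/2}e^{(t+h-s)A}$. Exploiting translation invariance of $(I-A)^{-1}$ and the bounds $\|(I-A)^{-1}\nabla\theta_N\|_{L^2} \leq \|\theta_N\|_{L^2}$ and $\|(I-A)^{-1}\theta_N\|_{L^2} \leq \|\theta_N\|_{L^2}$, together with $\lambda_r^a \leq 1$ for the compensated Poisson martingale, one arrives at
$$E\bigl[\|M_t^{i,N} - M_s^{i,N}\|_{W^{-2,2}}^2\bigr] \leq C(t-s) \cdot \frac{\|\theta_N\|_{L^2}^2}{N} \cdot E\bigl[[S_T^N]\bigr], \qquad i=1,2.$$

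The step I expect to be most delicate is precisely this martingale bound: naively $\|\theta_N\|_{L^2}^2 \sim N^{\beta}$ diverges, but the quotient $\|\theta_N\|_{L^2}^2/N$ is controlled by the scaling inequality (\ref{bound on theta N}) as soon as $\beta<1$, which is automatic under (\ref{assumption on eps beta}). Combined with the uniform exponential integrability of the total mass $[S_T^N]$ quoted after (\ref{monotone}), this produces the linear-in-$|t-s|$ mean-square bound, and hence the lemma.
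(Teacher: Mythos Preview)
Your proof is correct and follows the same overall strategy as the paper: decompose $h_t^N - h_s^N$ via the SPDE into the two drift integrals and the two martingale increments, bound each contribution in $W^{-2,2}$, and exploit $\gamma<1/2$ so that $\int_0^T\!\int_0^T |t-s|^{-2\gamma}\,ds\,dt<\infty$. The martingale estimates you sketch are essentially identical to Step~2 of the paper's proof.

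The one genuine difference is in the handling of the drift. You take the expectation first and use Fubini together with (\ref{L2 est}), i.e.\ $\sup_{r}E[\|h_r^N\|_{L^2}^2]\leq C$, to get directly $E\bigl[\|\int_s^t \Delta h_r^N\,dr\|_{W^{-2,2}}^2\bigr]\leq C(t-s)^2$, and then a single application of Markov's inequality finishes. The paper instead bounds the drift pathwise by $C(t-s)^2\sup_{r\in[0,T]}\|h_r^N\|_{L^2}^2$ and then argues that this \emph{pathwise} supremum is bounded in probability. Your route is slightly cleaner: it needs only the supremum over $t$ of $E[\|h_t^N\|_{L^2}^2]$, which is exactly what (\ref{L2 est}) provides, whereas the paper's appeal to (\ref{L2 est}) for control of $\sup_r\|h_r^N\|_{L^2}^2$ in probability is not immediate from that estimate alone.
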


\begin{proof}
\textbf{Step 1. }We need to estimate $\left\Vert h_{t}^{N}-h_{s}%
^{N}\right\Vert _{W^{-2,2}}^{2}$ in such a way that it cancels with the
singularity in the denominator at $t=s$. Notice that $L^{2}\subset W^{-2,2}$
with continuous embedding, namely there exists a constant $C>0$ such that
$\left\Vert f\right\Vert _{W^{-2,2}}\leq C\left\Vert f\right\Vert _{L^{2}}$
for all $f\in L^{2}$;\ similarly for $W^{-1,2}\subset W^{-2,2}$. Moreover, the
linear operator $\Delta$ is bounded from $L^{2}$ to $W^{-2,2}$. Therefore (we
denote by $C>0$ any constant independent of $N$, $h_{.}^{N}$, $t$, $s$)
\begin{align*}
\left\Vert h_{t}^{N}-h_{s}^{N}\right\Vert _{W^{-2,2}}^{2}  &  \leq C\left\Vert
\int_{s}^{t}\Delta h_{r}^{N}\mathrm{d}r\right\Vert _{W^{-2,2}}^{2}+C\left\Vert
\int_{s}^{t}\theta_{N}\ast\left(  \left(  1-\theta_{N}\ast S_{r}^{N}\right)
S_{r}^{N}\right)  \mathrm{d}r\right\Vert _{W^{-2,2}}^{2}\\
&  +C\left\Vert M_{t}^{1,N}-M_{s}^{1,N}\right\Vert _{W^{-2,2}}^{2}+C\left\Vert
M_{t}^{2,N}-M_{s}^{2,N}\right\Vert _{W^{-2,2}}^{2}%
\end{align*}
hence by H\"{o}lder inequality%
\begin{align*}
&  \leq C\left(  t-s\right)  \int_{s}^{t}\left\Vert h_{r}^{N}\right\Vert
_{L^{2}}^{2}\mathrm{d}r+C\left(  t-s\right)  \int_{s}^{t}\left\Vert \theta
_{N}\ast\left(  \left(  1-\theta_{N}\ast S_{r}^{N}\right)  S_{r}^{N}\right)
\right\Vert _{L^{2}}^{2}\mathrm{d}r\\
&  +C\left\Vert M_{t}^{1,N}-M_{s}^{1,N}\right\Vert _{W^{-1,2}}^{2}+C\left\Vert
M_{t}^{2,N}-M_{s}^{2,N}\right\Vert _{L^{2}}^{2}%
\end{align*}
and now using (\ref{rules on f})%
\[
\leq C\left(  t-s\right)  \int_{s}^{t}\left\Vert h_{r}^{N}\right\Vert _{L^{2}%
}^{2}\mathrm{d}r+C\left\Vert M_{t}^{1,N}-M_{s}^{1,N}\right\Vert _{W^{-1,2}%
}^{2}{}^{2}+C\left\Vert M_{t}^{2,N}-M_{s}^{2,N}\right\Vert _{L^{2}}^{2}.
\]
Accordingly, we split the estimate of $P\left(  \int_{0}^{T}\int_{0}^{T}%
\frac{\left\Vert h_{t}^{N}-h_{s}^{N}\right\Vert _{W^{-2,2}}^{2}}%
{|t-s|^{1+2\gamma}}\mathrm{d}s\mathrm{d}t>R\right)  $ in three more elementary
estimates, that now we handle separately; the final result will be a
consequence of them.

The number $C_{\gamma}=\int_{0}^{T}\int_{0}^{T}\frac{1}{|t-s|^{2\gamma}%
}\mathrm{d}s\mathrm{d}t$ is finite, hence the first addend is bounded by
(renaming the constant $C$)%
\begin{align*}
&  P\left(  \int_{0}^{T}\int_{0}^{T}\frac{C\left(  t-s\right)  \left(  \left[
S_{T}^{N}\right]  +1\right)  \sup_{r\in\left[  0,T\right]  }\left\Vert
h_{r}^{N}\right\Vert _{L^{2}}^{2}}{|t-s|^{1+2\gamma}}\mathrm{d}s\mathrm{d}%
t>R\right) \\
&  =P\left(  \left(  \left[  S_{T}^{N}\right]  +1\right)  \sup_{r\in\left[
0,T\right]  }\left\Vert h_{r}^{N}\right\Vert _{L^{2}}^{2}>R/C\right) \\
&  \leq P\left(  \left[  S_{T}^{N}\right]  +1>\sqrt{R/C}\right)  +P\left(
\sup_{r\in\left[  0,T\right]  }\left\Vert h_{r}^{N}\right\Vert _{L^{2}}%
^{2}>\sqrt{R/C}\right)
\end{align*}
and both these terms are, uniformly in $N$, small for large $R$, due to Lemma
\ref{lemma total mass} and estimate (\ref{L2 est}).

\textbf{Step 2}. Concerning the martingale terms, we now prove that%
\[
E\left\Vert M_{t}^{1,N}-M_{s}^{1,N}\right\Vert _{W^{-1,2}}^{2}\leq C\left\vert
t-s\right\vert
\]
and%
\[
E\left\Vert M_{t}^{2,N}-M_{s}^{2,N}\right\Vert _{L^{2}}\leq C\left\vert
t-s\right\vert
\]
for some constant $C>0$. By Chebyshev's inequality it follows that%
\[
\lim_{R\rightarrow\infty}\sup_{N\in\mathbb{N}}P\left(  \int_{0}^{T}\int%
_{0}^{T}\frac{\left\Vert M_{t}^{1,N}-M_{s}^{1,N}\right\Vert _{W^{-1,2}}^{2}%
}{|t-s|^{1+2\gamma}}\mathrm{d}s\mathrm{d}t>R\right)  =0,
\]%
\[
\lim_{R\rightarrow\infty}\sup_{N\in\mathbb{N}}P\left(  \int_{0}^{T}\int%
_{0}^{T}\frac{\left\Vert M_{t}^{2,N}-M_{s}^{2,N}\right\Vert _{L^{2}}^{2}%
}{|t-s|^{1+2\gamma}}\mathrm{d}s\mathrm{d}t>R\right)  =0
\]
and the proof will be complete. For notational convenience, we abbreviate, for
$=1,2$,%
\[
M_{t}^{i,N}(x)=\frac{1}{N}\sum_{a\in A^{N}}M_{t}^{i,a}(x).
\]
Note, that for every $x\in\mathbb{R}^{d}$ the processes $M^{1,N}(x)$ and
$M^{2,N}(x)$ are martingales. It follows, with computations similar to those
of Lemma \ref{lemma martingale 1}, for $t\geq s$
\begin{align*}
E\left\Vert M_{t}^{1,N}-M_{s}^{1,N}\right\Vert _{W^{-1,2}}^{2}  &
=\int_{\mathbb{R}^{d}}\frac{1}{N^{2}}\sum_{a\in A^{N}}E\left[  \int_{s}%
^{t}1_{r\in I^{a}}(I-\Delta)^{-\frac{1}{2}}\nabla\theta_{N}\left(  x-X_{r}%
^{a}\right)  ^{2}\mathrm{d}r\right]  \mathrm{d}x\\
&  =\frac{1}{N}\left\Vert (I-\Delta)^{-\frac{1}{2}}\nabla\theta_{N}\right\Vert
_{L^{2}}^{2}E\int_{s}^{t}\frac{1}{N}\sum_{a\in A^{N}}1_{r\in I^{a}}%
\mathrm{d}r\\
&  \leq\frac{1}{N}\left\Vert \theta_{N}\right\Vert _{L^{2}}^{2}\left(
t-s\right)  \leq C\left(  t-s\right)  .
\end{align*}
Similarly, for the second martingale, in analogy with Lemma
\ref{lemma martingale 2},
\begin{align*}
E\left\Vert M_{t}^{2,N}-M_{s}^{2,N}\right\Vert _{L^{2}}^{2}  &  =\int%
_{\mathbb{R}^{d}}\frac{1}{N^{2}}\sum_{a\in A^{N}}E\left[  M_{t}^{2,a}%
(x)^{2}-M_{s}^{2,a}(x)^{2}\right]  \mathrm{d}x\\
&  =\int_{\mathbb{R}^{d}}\frac{1}{N^{2}}\sum_{a\in A^{N}}E\left[  \int_{s}%
^{t}1_{r\in I^{a}}\theta_{N}\left(  x-X_{r}^{a}\right)  ^{2}\lambda_{r}%
^{a}\mathrm{d}r\right]  \mathrm{d}x\\
&  \leq C_{F}\frac{1}{N}\left\Vert \theta_{N}\right\Vert _{L^{2}}^{2}E\int%
_{s}^{t}\frac{1}{N}\sum_{a\in A^{N}}1_{r\in I^{a}}\mathrm{d}r\leq C\left(
t-s\right)  .
\end{align*}

\end{proof}

\section{Passage to the limit\label{sect convergence}}

\subsection{Criterion of compactness\label{subsect compactness}}

A version of Aubin-Lions lemma, see \cite{Lions}, \cite{FlaGat},
\cite{BanasBrz}, states that when $E_{0}\subset E\subset E_{1}$ are three
Banach spaces with continuous dense embedding, $E_{0},E_{1}$ reflexive, with
$E_{0}$ compactly embedded into $E$, given $p,q\in\left(  1,\infty\right)  $
and $\gamma\in\left(  0,1\right)  $, the space $L^{q}\left(  0,T;E_{0}\right)
\cap W^{\gamma,p}\left(  0,T;E_{1}\right)  $ is compactly embedded into
$L^{q}\left(  0,T;E\right)  $.

Given the number $\alpha_{0}$ in assumption (\ref{assumption on theta}), we
take any pair $\alpha^{\prime}<\alpha$ in the interval $(d/2,\alpha_{0})$. We
use Aubin-Lions lemma with $E=W^{\alpha^{\prime},2}\left(  D\right)  $,
$E_{0}=W^{\alpha,2}\left(  D\right)  $, $0<\gamma<\frac{1}{2}$ and
$E_{1}=W^{-2,2}\left(  \mathbb{R}^{d}\right)  $, where $D$ is any regular
bounded domain. The lemma states that $L^{2}\left(  0,T;W^{\alpha,2}\left(
D\right)  \right)  \cap W^{\gamma,2}\left(  0,T;W^{-2,2}\left(  \mathbb{R}%
^{d}\right)  \right)  $ is compactly embedded into $L^{2}\left(
0,T;W^{\alpha^{\prime},2}\left(  D\right)  \right)  $.

Notice that for $\gamma p>1$, the space $W^{\gamma,p}\left(  0,T;E_{1}\right)
$ is embedded into $C\left(  \left[  0,T\right]  ;E_{1}\right)  $, so it is
not suitable for our purposes since we have to deal with discontinuous
processes. However, for $\gamma p<1$ the space $W^{\gamma,p}\left(
0,T;E_{1}\right)  $ includes piecewise constant functions, as one can easily
check. Therefore it is a good space for c\`{a}dl\`{a}g processes.

Now, consider the space
\[
Y_{0}:=L^{\infty}\left(  0,T;L^{2}\left(  \mathbb{R}^{d}\right)  \right)  \cap
L^{2}\left(  0,T;W^{\alpha,2}\left(  \mathbb{R}^{d}\right)  \right)  \cap
W^{\gamma,2}\left(  0,T;W^{-2,2}\left(  \mathbb{R}^{d}\right)  \right)  .
\]
Using the Fr\'{e}chet topology on $L^{2}\left(  0,T;W_{loc}^{\alpha^{\prime
},2}\left(  \mathbb{R}^{d}\right)  \right)  $ defined as
\[
d\left(  f,g\right)  =\sum_{n=1}^{\infty}2^{-n}\left(  1\wedge\int_{0}%
^{T}\left\Vert (f-g)\left(  t,\cdot\right)  \right\Vert _{W^{\alpha^{\prime
},2}\left(  B\left(  0,n\right)  \right)  }^{p}dt\right)
\]
one has that $L^{2}\left(  0,T;W^{\alpha,2}\left(  \mathbb{R}^{d}\right)
\right)  \cap W^{\gamma,2}\left(  0,T;W^{-2,2}\left(  \mathbb{R}^{d}\right)
\right)  $ is compactly embedded into $L^{2}\left(  0,T;W_{loc}^{\alpha
^{\prime},2}\left(  \mathbb{R}^{d}\right)  \right)  $ (the proof is
elementary, using the fact that if a set is compact in $L^{2}\left(
0,T;W_{loc}^{\alpha^{\prime},2}\left(  B\left(  0,n\right)  \right)  \right)
$ for every $n$ then it is compact in $L^{2}\left(  0,T;W_{loc}^{\alpha
^{\prime},2}\left(  \mathbb{R}^{d}\right)  \right)  $\ with this topology; see
a similar result in \cite{BrzMot}). Denoting by $L_{w\ast}^{\infty}\left(
0,T;L^{2}\left(  \mathbb{R}^{d}\right)  \right)  $ and $L_{w}^{2}\left(
0,T;W^{\alpha,2}\left(  \mathbb{R}^{d}\right)  \right)  $ the spaces
$L^{\infty}\left(  0,T;L^{2}\left(  \mathbb{R}^{d}\right)  \right)  $ and
$L^{2}\left(  0,T;W^{\alpha,2}\left(  \mathbb{R}^{d}\right)  \right)  $
endowed respectively with the weak star and weak topology, we have that
$Y_{0}$ is compactly embedded into%
\begin{equation}
Y:=L_{w\ast}^{\infty}\left(  0,T;L^{2}\left(  \mathbb{R}^{d}\right)  \right)
\cap L_{w}^{2}\left(  0,T;W^{\alpha,2}\left(  \mathbb{R}^{d}\right)  \right)
\cap L^{2}\left(  0,T;W_{loc}^{\alpha^{\prime},2}\left(  \mathbb{R}%
^{d}\right)  \right)  . \label{topology of convergence}%
\end{equation}
Notice that%
\[
L^{2}\left(  0,T;W_{loc}^{\alpha^{\prime},2}\left(  \mathbb{R}^{d}\right)
\right)  \subset L^{2}\left(  0,T;C\left(  D\right)  \right)
\]
for every regular bounded domain $D\subset\mathbb{R}^{d}$.

Denote by $\left\{  Q^{N}\right\}  _{N\in\mathbb{N}}$ the laws of $\left\{
h^{N}\right\}  _{N\in\mathbb{N}}$ on $Y_{0}$. From the "boundedness in
probability" of the family $\left\{  Q^{N}\right\}  _{N\in\mathbb{N}}$, in
$Y_{0}$, stated by Lemma \ref{lemma p equal 2} (notice that square
integrability in time of $\left\Vert h_{t}^{N}\right\Vert _{L^{2}\left(
\Omega;W^{\alpha,2}\left(  \mathbb{R}^{d}\right)  \right)  }$ comes from the
assumption $\alpha_{0}-\rho_{0}\leq1$\ which implies $\alpha-\rho_{0}<1$) and
Lemma \ref{lemma fractional Sobolev}, it follows that the family $\left\{
Q^{N}\right\}  _{N\in\mathbb{N}}$ is tight in $Y$, hence relatively compact,
by Prohorov theorem. From every subsequence of $\left\{  Q^{N}\right\}
_{N\in\mathbb{N}}$ it is possible to extract a further subsequence which
converges to a probability measure $Q$ on $Y$. We shall prove that every such
limit measure $Q$ is a Dirac measure $Q=\delta_{u}$ concentrated to the same
element $u\in Y$, hence the whole sequence $\left\{  Q^{N}\right\}
_{N\in\mathbb{N}}$ converges to $\delta_{u}$; and also the processes $\left\{
h^{N}\right\}  _{N\in\mathbb{N}}$ converge in probability to $u$.

Finally, since $\alpha^{\prime}<\alpha$ are arbitrary in the interval
$(d/2,\alpha_{0})$, in Theorem \ref{Thm 1} we have stated the weak convergence
in $L^{2}\left(  0,T;W^{\alpha,2}\left(  \mathbb{R}^{d}\right)  \right)  $ and
the strong convergence in $L^{2}\left(  0,T;W_{loc}^{\alpha,2}\left(
\mathbb{R}^{d}\right)  \right)  $ with the same symbol $\alpha\in
(d/2,\alpha_{0})$.

\subsection{Convergence}

Let us consider also the auxiliary equation%
\begin{equation}
\frac{\partial u}{\partial t}=\Delta u+u\left(  1-u\right)  ^{+},\qquad
u|_{t=0}=u_{0}. \label{FKPP plus}%
\end{equation}

\begin{definition}
\label{def weak sol}Given $u_{0}:\mathbb{R}^{d}\rightarrow\mathbb{R}$
measurable, with $0\leq u_{0}\left(  x\right)  \leq1$ (resp. $u_{0}\left(
x\right)  \geq0$), we call a measurable function $u:\left[  0,T\right]
\times\mathbb{R}^{d}\rightarrow\mathbb{R}$ a weak solution of equation
(\ref{FKPP}) (resp. of equation (\ref{FKPP plus})), if
\[
0\leq u_{t}\left(  x\right)  \leq1
\]
(resp. $u_{t}\left(  x\right)  \geq0$) for a.e. $\left(  t,x\right)
\in\left[  0,T\right]  \times\mathbb{R}^{d}$ and%
\begin{equation}
\left\langle u_{t},\phi\right\rangle =\left\langle u_{0},\phi\right\rangle
+\int_{0}^{t}\left\langle u_{r},\Delta\phi\right\rangle \mathrm{d}r+\int%
_{0}^{t}\left\langle \left(  1-u_{r}\right)  u_{r},\phi\right\rangle
\mathrm{d}r \label{weakFKPP}%
\end{equation}
(resp. with the term $\left(  1-u_{r}\right)  ^{+}$ in place of $\left(
1-u_{r}\right)  $) for all $\phi\in C_{c}^{\infty}\left(  \mathbb{R}%
^{d}\right)  $ and a.e. $t\in\left[  0,T\right]  $.
\end{definition}

\begin{remark}
\label{Remark weak solutions}If $u:\left[  0,T\right]  \times\mathbb{R}%
^{d}\rightarrow\mathbb{R}$ is a measurable function, with $0\leq u_{t}\left(
x\right)  \leq1$ (resp. $u_{t}\left(  x\right)  \geq0$), such that
\[
0=\int_{0}^{T}\int_{\mathbb{R}^{d}}\left(  \frac{\partial\phi_{t}}{\partial
t}+\Delta\phi_{t}+\left(  1-u_{t}\right)  \phi_{t}\right)  u_{t}%
dxdt+\left\langle u_{0},\phi_{0}\right\rangle
\]
(resp. with the term $\left(  1-u_{r}\right)  ^{+}$ in place of $\left(
1-u_{r}\right)  $) for all time-dependent test functions $\phi_{t}$, of class
$C_{c}^{\infty}\left(  \left[  0,T\right]  \times\mathbb{R}^{d}\right)  $,
then one can prove (by taking test functions $\phi_{t}\left(  x\right)  $ of
the form $\eta_{t}^{\epsilon}\cdot\phi\left(  x\right)  $ with $\eta
_{t}^{\epsilon}$ converging to $1_{\cdot\leq t}$) that, for every
time-independent test function $\phi\in C_{c}^{\infty}\left(  \mathbb{R}%
^{d}\right)  $ we have that identity (\ref{weakFKPP}) (resp. with the term
$\left(  1-u_{r}\right)  ^{+}$ in place of $\left(  1-u_{r}\right)  $) holds.
\end{remark}

\begin{lemma}
\label{lemma:support of Q} {Under the assumptions of Theorem \ref{Thm 1}} $Q$
is supported on the set of weak solutions of equation (\ref{FKPP plus}).
\end{lemma}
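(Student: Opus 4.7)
The strategy is to pass to the limit in the weak form of the evolution equation for $h^{N}$: for every $\phi \in C_c^\infty(\mathbb{R}^d)$,
\begin{equation*}
\langle h_t^N, \phi\rangle = \langle h_0^N, \phi\rangle + \int_0^t \langle h_r^N, \Delta\phi\rangle \, dr + \int_0^t \langle \theta_N \ast ((1-h_r^N)^+ S_r^N), \phi\rangle \, dr + \langle M_t^{1,N}, \phi\rangle + \langle M_t^{2,N}, \phi\rangle.
\end{equation*}
By the tightness established in Section \ref{subsect compactness}, a subsequence of the laws $Q^N$ converges on $Y$ to $Q$. Via a Skorokhod-type representation (using Jakubowski's theorem on the non-metric space $Y$), I realize on an auxiliary probability space processes $\tilde h^N \sim Q^N$ and $\tilde u \sim Q$ with $\tilde h^N \to \tilde u$ almost surely in $Y$, and I aim to show that $\tilde u$ almost surely satisfies the weak formulation of Definition \ref{def weak sol} for (\ref{FKPP plus}).

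Most terms pass to the limit routinely. The Laplacian term converges by weak $L^2(0,T;L^2)$-convergence; the initial-condition term converges via $\langle h_0^N, \phi\rangle = \langle S_0^N, \theta_N \ast \phi\rangle$ combined with uniform convergence $\theta_N \ast \phi \to \phi$ and the assumed weak convergence $S_0^N \to u_0\, dx$; the two martingale terms tested against $\phi$ have variance $O(N^{-1} E[[S_T^N]])$, by the It\^o isometry (computations analogous to Lemmata \ref{lemma martingale 1}--\ref{lemma martingale 2}, with $\phi$ in the role of $(I-A)^{\alpha/2} e^{\cdot A} \nabla \theta_N$), hence vanish in $L^2$; and $\tilde u \geq 0$ a.e.\ is inherited from $h^N \geq 0$. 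The crux is the nonlinear term, which I split as
\begin{equation*}
\int_0^t \langle \theta_N \ast ((1-h_r^N)^+ S_r^N), \phi\rangle \, dr = \int_0^t \langle (1-h_r^N)^+ h_r^N, \phi\rangle \, dr + R^N(t),
\end{equation*}
the moderate-interaction replacement, with
\begin{equation*}
R^N(t) = \int_0^t \int \phi(x) \int \theta_N(x-y) \bigl[ (1-h_r^N(y))^+ - (1-h_r^N(x))^+ \bigr] \, dS_r^N(y) \, dx \, dr.
\end{equation*}
Using the 1-Lipschitz bound on $(1-\cdot)^+$ together with $|h_r^N(y)-h_r^N(x)| \leq [h_r^N]_{C^\varepsilon} |y-x|^\varepsilon$ (for small $\varepsilon > 0$ with $\alpha > d/2 + \varepsilon$), I would bound
\begin{equation*}
|R^N(t)| \leq C \, \epsilon_N^\varepsilon \int_0^T [h_r^N]_{C^\varepsilon} \, \|h_r^N\|_{L^1(\mathrm{supp}\,\phi)} \, dr,
\end{equation*}
which vanishes in $L^1(P)$ by Sobolev embedding $W^{\alpha,2} \hookrightarrow C^\varepsilon$, Lemma \ref{lemma p equal 2}, and (\ref{L2 est}); the temporal singularity in Lemma \ref{lemma p equal 2} is square-integrable in $r$ thanks to $\rho_0 \geq \alpha_0 - 1 \geq \alpha - 1$. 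The remaining main term then passes to the limit via the Lipschitz continuity on $[0,\infty)$ of $z \mapsto (1-z)^+ z$ (constant $2$) combined with the strong local convergence $\tilde h^N \to \tilde u$ in $L^2(0,T; L^2(\mathrm{supp}\,\phi))$.

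Assembling these limits and invoking a countable-dense family of test functions to produce a single full-measure set, $\tilde u$ almost surely satisfies the weak identity for a.e.\ $t \in [0,T]$, so $Q$ is supported on the set of weak solutions of (\ref{FKPP plus}). The main obstacle is the moderate-interaction estimate for $R^N(t)$: every other limit is essentially routine once tightness is in hand, but controlling $R^N(t)$ genuinely uses the H\"older regularity of $h^N$ provided by Lemma \ref{lemma p equal 2}, and this is precisely where the condition $\beta < 1/2$ (via (\ref{assumption on theta})) is essential.
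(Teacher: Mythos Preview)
Your overall strategy is sound and close to the paper's, but your treatment of the nonlinear term takes a genuinely different route. The paper (Lemma~\ref{Lemma 2 auxiliary}) splits
\[
\bigl\langle \theta_N\ast\bigl((1-h^N_t)^+S^N_t\bigr)-(1-h^N_t)^+h^N_t,\ \phi\bigr\rangle
\]
into four pieces $I^N+II^N+III^N+IV^N$ by inserting the Skorokhod limit $h$: the first moves the convolution onto the test function via $\langle S^N_t(1-h^N_t)^+,\ \theta_N\ast\phi-\phi\rangle$, the middle two compare $h^N$ to $h$ using uniform (Sobolev-embedding) and weak convergence respectively, and the last uses strong $L^2_{loc}$ convergence. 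Your decomposition instead isolates a single remainder $R^N$ controlled by the H\"older oscillation of $h^N$ at scale $\epsilon_N$, then passes the main term to the limit via Lipschitz continuity of $z\mapsto(1-z)^+z$. Your route is more direct and avoids referencing the limit $h$ inside the remainder estimate; the paper's route, on the other hand, only uses $\|\theta_N\ast\phi-\phi\|_\infty\to0$, which holds for any probability density $\theta$.

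That last point is where your argument has a small technical gap. Your bound $|R^N(t)|\le C\,\epsilon_N^\varepsilon\int_0^T[h_r^N]_{C^\varepsilon}\,\|h_r^N\|_{L^1(\mathrm{supp}\,\phi)}\,dr$ tacitly uses $\theta_N(z)|z|^\varepsilon\le C\,\epsilon_N^\varepsilon\,\theta_N(z)$, which requires $\theta$ to have compact support (or at least $\int|w|^\varepsilon\theta(w)\,dw<\infty$); neither is part of assumption~(\ref{assumption on theta}). This is repairable---split the $y$-integral into $|x-y|\le\delta$ and $|x-y|>\delta$, using the H\"older bound on the near piece and the crude bound $|(1-h^N(y))^+-(1-h^N(x))^+|\le1$ together with $\int_{|w|>\delta/\epsilon_N}\theta(w)\,dw\to0$ on the far piece---but as written the step is not justified under the paper's hypotheses. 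A second minor point: $Y$-convergence carries no pointwise-in-$t$ information, so you should explain how $\langle h_t^N,\phi\rangle$ converges for a.e.\ $t$; the paper avoids this by testing against time-dependent $\phi\in C_c^\infty([0,T]\times\mathbb{R}^d)$ and invoking Remark~\ref{Remark weak solutions}, which is the cleanest fix.
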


\begin{proof}
\textbf{Step 1}. We apply remark \ref{Remark weak solutions}. For each
$\phi\in C_{c}^{\infty}\left(  \left[  0,T\right]  \times\mathbb{R}%
^{d}\right)  $, we introduce two functionals%
\[
u\mapsto\Psi_{\phi}\left(  u\right)  :=\int_{0}^{T}\int_{\mathbb{R}^{d}%
}\left(  \frac{\partial\phi_{t}}{\partial t}+\Delta\phi_{t}+\left(
1-u_{t}\right)  ^{+}\phi_{t}\right)  u_{t}dxdt+\left\langle u_{0},\phi
_{0}\right\rangle
\]%
\[
u\mapsto\Psi_{\phi}^{+}\left(  u\right)  :=\int_{0}^{T}\int_{\mathbb{R}^{d}%
}u_{t}\phi_{t}dxdt.
\]
They are continuous on $Y$: since $\phi$ is bounded measurable and compact
support and we have at most the quadratic term $u_{t}^{2}$ under the integral
signs, the topology of $L^{2}\left(  0,T;L_{loc}^{2}\left(  \mathbb{R}%
^{d}\right)  \right)  $, weaker than the topology of $Y$, is sufficient to
prove continuity. Denote by $Q^{N}$ the law of $h_{t}^{N}$ and assume a
subsequence $Q^{N_{k}}$ weakly converges, in the topology of the space $Y$
defined by (\ref{topology of convergence}), to a probability measure $Q$. By
Portmanteau theorem, for every $\epsilon>0$,%
\[
Q\left(  u:\left\vert \Psi_{\phi}\left(  u\right)  \right\vert >\epsilon
\right)  \leq\underset{k\rightarrow\infty}{\lim\inf}\ Q^{N_{k}}\left(
u:\left\vert \Psi_{\phi}\left(  u\right)  \right\vert >\epsilon\right)
=\underset{k\rightarrow\infty}{\lim\inf} \ P\left(  \left\vert \Psi_{\phi
}\left(  h_{\cdot}^{N_{k}}\right)  \right\vert >\epsilon\right)  .
\]
To show $Q\left(  u:\left\vert \Psi_{\phi}\left(  u\right)  \right\vert
>\epsilon\right)  = 0$ we prove in Step 2 below that this lim inf is zero.
Since this holds for every $\epsilon>0$, we deduce $Q\left(  u:\Psi_{\phi
}\left(  u\right)  =0\right)  =1$. By a classical argument of density of a
countable set of test functions, we deduce
\[
Q\left(  \Psi_{\phi}\left(  u\right)  =0\text{ for all }\phi\in C_{c}^{\infty
}\left(  \left[  0,T\right]  \times\mathbb{R}^{d}\right)  \right)  =1.
\]
Similarly, if $\phi_{t}\geq0$, $\phi\in C_{c}^{\infty}\left(  \left[
0,T\right]  \times\mathbb{R}^{d}\right)  $, we apply the same argument to
$\Psi_{\phi}^{+}$ and get
\[
Q\left(  u:\int_{0}^{T}\int_{\mathbb{R}^{d}}u_{t}\phi_{t}dxdt<0\right)
\leq\underset{k\rightarrow\infty}{\lim\inf} \ P\left(  \int_{0}^{T}%
\int_{\mathbb{R}^{d}}h_{t}^{N}\phi_{t}dxdt<0\right)  =0
\]
hence $Q$ is supported on functions $u$ such that $u_{t}\left(  x\right)
\geq0$ for a.e. $\left(  t,x\right)  \in\left[  0,T\right]  \times
\mathbb{R}^{d}$. These two properties prove that $Q$ is supported on the set
of weak solutions of equation (\ref{FKPP plus}).

\textbf{Step 2}. It remains to prove that $\underset{k\rightarrow\infty
}{\lim\inf} \ P\left(  \left\vert \Psi_{\phi}\left(  h_{\cdot}^{N_{k}}\right)
\right\vert >\epsilon\right)  =0$. Let us write $N$ instead of $N_{k}$ for
simplicity of notation. We have%
\[
\Psi_{\phi}\left(  h_{\cdot}^{N}\right)  =\int_{0}^{T}\int_{\mathbb{R}^{d}%
}\left(  \frac{\partial\phi_{t}}{\partial t}+\Delta\phi_{t}+\left(
1-h_{t}^{N}\right)  ^{+}\phi_{t}\right)  h_{t}^{N}dxdt+\left\langle u_{0}%
,\phi_{0}\right\rangle .
\]
By It\^{o} formula, for every $\phi_{t}\in C_{c}^{\infty}\left(  \left[
0,T\right]  \times\mathbb{R}^{d}\right)  $, one has
\begin{align*}
0  &  =\int_{0}^{T}\int_{\mathbb{R}^{d}}\left(  \frac{\partial\phi_{t}%
}{\partial t}+\Delta\phi_{t}\right)  h_{t}^{N}dxdt+\int_{0}^{T}\int%
_{\mathbb{R}^{d}}\theta_{N}\ast\left(  \left(  1-h_{t}^{N}\right)  ^{+}%
S_{t}^{N}\right)  \phi_{t}dxdt\\
&  +\left\langle h_{0}^{N},\phi_{0}\right\rangle {+}\int_{\mathbb{R}^{d}}%
\int_{0}^{T}\phi_{t}dM_{t}^{1,N}dx+\int_{\mathbb{R}^{d}}\int_{0}^{T}\phi
_{t}dM_{t}^{2,N}dx.
\end{align*}
Hence,
\begin{align*}
\Psi_{\phi}\left(  h_{\cdot}^{N}\right)   &  =\int_{0}^{T}\int_{\mathbb{R}%
^{d}}\left[  \left(  1-h_{t}^{N}\right)  ^{+}h_{t}^{N}-\theta_{N}\ast\left(
\left(  1-h_{t}^{N}\right)  ^{+}S_{t}^{N}\right)  \right]  \phi_{t}dxdt\\
&  {-}\int_{\mathbb{R}^{d}}\int_{0}^{T}\phi_{t}dM_{t}^{1,N}dx{-}%
\int_{\mathbb{R}^{d}}\int_{0}^{T}\phi_{t}dM_{t}^{2,N}dx\\
&  {-}\left\langle h_{0}^{N},\phi_{0}\right\rangle {+}\left\langle u_{0}%
,\phi_{0}\right\rangle .
\end{align*}
In order to prove $\lim_{N\rightarrow\infty}P\left(  \left\vert \Psi_{\phi
}\left(  h_{\cdot}^{N}\right)  \right\vert >\varepsilon\right)  =0$, it is
sufficient to prove the same result for each one of the previous terms.
Lemma \ref{Lemma 2 auxiliary} deals with the first term and the two martingale
terms can be treated by Chebyshev's inequality and Lemma
\ref{Lemma 3 auxiliary} below. The terms
\[
{-}\left\langle h_{0}^{N},\phi_{0}\right\rangle {+}\left\langle u_{0},\phi
_{0}\right\rangle ={-}\left\langle S_{0}^{N},\theta_{N}\left(  -\cdot\right)
\ast\phi_{0}\right\rangle {+}\left\langle u_{0},\phi_{0}\right\rangle
\]
converges to zero in probability by the assumption that $S_{0}^{N}$ converges
weakly to $u_{0}\left(  x\right)  dx$, as $N\rightarrow\infty$, in
probability.

\end{proof}

\begin{lemma}
\label{Lemma 2 auxiliary} It holds
\begin{align*}
\int_{0}^{T}\int_{\mathbb{R}^{d}}\left[  \left(  1-h_{t}^{N}\right)  ^{+}%
h_{t}^{N}-\theta_{N}\ast\left(  \left(  1-h_{t}^{N}\right)  ^{+}S_{t}%
^{N}\right)  \right]  \phi_{t}dxdt \rightarrow0 \quad\text{as } N\to\infty
\end{align*}
in probability.
\end{lemma}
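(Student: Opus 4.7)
The plan is to exploit the Hölder regularity of $h_t^N$ (derived from Lemma~\ref{lemma p equal 2} via Sobolev embedding) together with the concentration of $\theta_N$ at the small scale $\epsilon_N = N^{-\beta/d}$. The first step is to rewrite the integrand to expose a cancellation: writing $f_t(y) := (1 - h_t^N(y))^+$ and using $h_t^N(x) = \int \theta_N(x-y)\, S_t^N(\mathrm{d}y)$, I find
\[
\bigl[(1-h_t^N)^+ h_t^N\bigr](x) - \bigl[\theta_N \ast ((1-h_t^N)^+ S_t^N)\bigr](x)
= \int_{\mathbb{R}^d} \theta_N(x-y)\bigl[f_t(x) - f_t(y)\bigr]\, S_t^N(\mathrm{d}y),
\]
so the question reduces to controlling increments of $f_t$ over the effective support (of radius $\epsilon_N$) of $\theta_N$.

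Next, since $u \mapsto (1-u)^+$ is $1$-Lipschitz, $|f_t(x) - f_t(y)| \leq |h_t^N(x) - h_t^N(y)|$. Fixing $\varepsilon > 0$ small enough that $\alpha - \varepsilon > d/2$, the Sobolev embedding $W^{\alpha,2}(\mathbb{R}^d) \hookrightarrow C_b^\varepsilon(\mathbb{R}^d)$ gives $|h_t^N(x) - h_t^N(y)| \leq C \|h_t^N\|_{W^{\alpha,2}} |x-y|^\varepsilon$. Substituting, multiplying by $\phi_t(x)$, and using Fubini yield, for each $t$,
\[
\left| \int_{\mathbb{R}^d} \phi_t(x)\,(\text{difference})(x)\, \mathrm{d}x \right|
\leq C \|\phi_t\|_\infty\, \|h_t^N\|_{W^{\alpha,2}}\, [S_t^N] \int_{\mathbb{R}^d} \theta_N(z)\, |z|^\varepsilon\, \mathrm{d}z.
\]
A change of variable $u = \epsilon_N^{-1} z$ shows $\int \theta_N(z)|z|^\varepsilon\, \mathrm{d}z = \epsilon_N^\varepsilon \int \theta(u)|u|^\varepsilon\, \mathrm{d}u$; the last integral is finite assuming (as is natural for a smooth mollifier) a small moment for $\theta$, and can otherwise be handled by splitting $\theta_N$ into a core of radius $R\epsilon_N$ and a tail whose $\theta$-mass vanishes as $R\to\infty$. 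The pointwise-in-$t$ bound is therefore $C\|\phi_t\|_\infty\, \epsilon_N^\varepsilon\, \|h_t^N\|_{W^{\alpha,2}}\, [S_T^N]$, using the monotonicity \eqref{monotone}.

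Finally I integrate in $t$ and take expectations. By Lemma~\ref{lemma p equal 2},
\[
\int_0^T \mathbb{E}\|h_t^N\|_{W^{\alpha,2}}\, \mathrm{d}t \leq C \int_0^T \Bigl(1 + t^{-((\alpha - \rho_0) \vee 0)/2}\Bigr)\, \mathrm{d}t < \infty,
\]
where finiteness follows because the hypotheses $\alpha < \alpha_0$ and $\rho_0 \geq \alpha_0 - 1$ force $(\alpha - \rho_0)/2 < 1/2$. Pairing this with the uniform-in-$N$ exponential integrability of $[S_T^N]$ (Lemma~\ref{lemma total mass}) via Cauchy--Schwarz yields
\[
\mathbb{E}\left| \int_0^T \int_{\mathbb{R}^d} \phi_t\,(\text{difference})\, \mathrm{d}x\, \mathrm{d}t \right| \leq C\, \epsilon_N^\varepsilon \longrightarrow 0,
\]
and Chebyshev's inequality then converts this into convergence in probability.

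The only delicate point is reconciling the time-singularity of $\|h_t^N\|_{W^{\alpha,2}}$ near $t=0$ (coming from non-smooth initial data) with the need to integrate in time; the conditions $\rho_0 \geq \alpha_0 - 1$ and $\alpha < \alpha_0$ are precisely what make the singularity integrable. Everything else is routine once the cancellation at the level of the integrand is spotted.
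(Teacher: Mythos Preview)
Your argument is correct and takes a genuinely different route from the paper's proof. The paper does not exploit the pointwise cancellation
\[
(1-h_t^N)^+ h_t^N(x) - \theta_N\ast\bigl((1-h_t^N)^+ S_t^N\bigr)(x)
= \int \theta_N(x-y)\bigl[f_t(x)-f_t(y)\bigr]\,S_t^N(\mathrm{d}y)
\]
at all. Instead it passes to a Skorokhod realisation, denotes by $h$ the almost sure limit of $h^N$, and splits the inner pairing into four pieces
\[
\theta_N\ast(S^N f^N) - S^N f^N,\quad S^N f^N - S^N f,\quad S^N f - h^N f,\quad h^N f - h^N f^N,
\]
where $f=(1-h)^+$ and $f^N=(1-h^N)^+$. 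Each piece is then handled using, respectively, that $\theta_N\ast\phi\to\phi$ uniformly, the local uniform convergence $h^N\to h$ (via Sobolev embedding and the compactness of Section~\ref{subsect compactness}), the weak convergence of $S^N$ and $h^N$, and the local $L^2$ convergence of $h^N$. Your proof is more self-contained: it needs only the a~priori bound of Lemma~\ref{lemma p equal 2} together with the scaling of $\theta_N$, avoids Skorokhod's theorem and any reference to the limit object, and yields an explicit rate $\epsilon_N^\varepsilon$. The paper's decomposition, on the other hand, is more modular and never uses any moment of $\theta$; under the paper's hypotheses (only $\theta\in W^{\alpha_0,2}$, no moment assumed) your direct bound on $\int\theta(u)|u|^\varepsilon\,\mathrm{d}u$ does require the core/tail splitting you sketched, but that is indeed routine since $\theta$ is a probability density.
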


\begin{proof}
We split the inner integral into
\begin{align*}
&  \left\vert \left\langle \theta_{N} \ast( S_{t}^{N} \left(  1-h_{t}%
^{N}\right)  ^{+}) - h_{t}^{N} \left(  1-h_{t}^{N}\right)  ^{+}, \phi_{t}
\right\rangle \right\vert \\
&  \leq\left\vert \left\langle \theta_{N} \ast( S_{t}^{N} \left(  1-h_{t}%
^{N}\right)  ^{+}) - S_{t}^{N} \left(  1-h_{t}^{N}\right)  ^{+}, \phi_{t}
\right\rangle \right\vert \\
&  \hspace*{12pt} + \left\vert \left\langle S_{t}^{N} \left(  1-h_{t}%
^{N}\right)  ^{+} - S_{t}^{N} \left(  1-h_{t}\right)  ^{+}, \phi
_{t}\right\rangle \right\vert \\
&  \hspace*{12pt} +\left\vert \left\langle S_{t}^{N} \left(  1-h_{t}\right)
^{+} - h_{t}^{N} \left(  1-h_{t}\right)  ^{+}, \phi_{t} \right\rangle
\right\vert \\
&  \hspace*{12pt} + \left\vert \left\langle h_{t}^{N} \left(  1-h_{t}\right)
^{+} - h_{t}^{N} \left(  1-h_{t}^{N}\right)  ^{+}, \phi_{t} \right\rangle
\right\vert \\
&  = I_{t}^{N}+II_{t}^{N}+III_{t}^{N}+IV_{t}^{N},
\end{align*}
where $h$ denotes the almost sure limit of $(h^{N})_{N\in\mathbb{N}}$ given by
Skorokhod's representation theorem. To prove Lemma \ref{Lemma 2 auxiliary} it
is sufficient to show that each term on the right-hand side integrated in time
converges in probability to zero. In order to prove that, it is sufficient to
show that the expectation converge to zero for every $t\in[0,T]$, because
\[
P\left(  \int_{0}^{T} I_{t}^{N} \mathrm{d}t > \varepsilon\right)  \leq\frac
{1}{\varepsilon} E \int_{0}^{T} I_{t}^{N} \mathrm{d}t = \frac{1}{\varepsilon}
\int_{0}^{T} E I_{t}^{N} \mathrm{d}t \to0.
\]
Note, there is a compact set $K$, such that $K\supset\cup_{t\in[0,T]}%
\operatorname{supp}(\phi_{t})$. For ease of notation we omit the subscript $t$
in the following.\newline First,
\begin{align*}
I^{N}  &  =\left\vert \left\langle \theta_{N} \ast( S_{t}^{N} \left(
1-h_{t}^{N}\right)  ^{+}) - S_{t}^{N} \left(  1-h_{t}^{N}\right)  ^{+},
\varphi\right\rangle \right\vert = \left\vert \left\langle S_{t}^{N} \left(
1-h_{t}^{N}\right)  ^{+}, \theta_{N}\ast\varphi- \varphi\right\rangle
\right\vert \\
&  \leq\left[  S_{t}^{N}\right]  \left\Vert \theta_{N}\ast\varphi-
\varphi\right\Vert _{\infty}.
\end{align*}
Hence,
\[
E I^{N} \leq E \left[  S_{t}^{N}\right]  \left\Vert \theta_{N}\ast\varphi-
\varphi\right\Vert _{\infty} \leq\underbrace{\left\Vert \theta_{N}\ast\varphi-
\varphi\right\Vert _{\infty}}_{\to0} \underbrace{\sup_{N\in\mathbb{N}} E
\left[  S_{T}^{N}\right]  }_{<\infty}.
\]
Second, observe that
\begin{align*}
II^{N}  &  =\left\vert \left\langle S_{t}^{N} \left(  1-h_{t}^{N}\right)  ^{+}
- S_{t}^{N} \left(  1-h_{t}\right)  ^{+} , \varphi\right\rangle \right\vert
\leq|\varphi|_{\infty}\left[  S_{T}^{N}\right]  \sup_{x\in K} \left\vert
\left(  1-h_{t}^{N}(x)\right)  ^{+} - \left(  1-h_{t}(x)\right)  ^{+}
\right\vert \\
&  \leq|\varphi|_{\infty}\left[  S_{T}^{N}\right]  \sup_{x\in K} \left\vert
h_{t}^{N}(x)-h_{t}(x) \right\vert .
\end{align*}
and by Sobolev embedding and Lemma \ref{lemma p equal 2} we have
\[
\sup_{x\in K} \left\vert h_{t}^{N}(x)-h_{t}(x) \right\vert \rightarrow0 .
\]
It follows
\begin{align*}
E II^{N}  &  \leq\left\Vert \varphi\right\Vert _{\infty}E\left(  \left[
S_{T}^{N}\right]  \sup_{x\in K} \left\vert h_{t}^{N}(x)-h_{t}(x) \right\vert
\right) \\
&  \leq\left\Vert \varphi\right\Vert _{\infty}E\left(  \left[  S_{T}%
^{N}\right]  ^{2}\right)  E\left(  \sup_{x\in K} \left\vert h_{t}^{N}%
(x)-h_{t}(x) \right\vert ^{2}\right) \\
&  \leq\left\Vert \varphi\right\Vert _{\infty}\underbrace{\sup_{N
\in\mathbb{N}}E\left(  \left[  S_{T}^{N}\right]  ^{2}\right)  }_{<\infty}
\underbrace{ E\left(  \sup_{x\in K} \left\vert h_{t}^{N}(x)-h_{t}(x)
\right\vert ^{2}\right)  }_{\to0} \to0.
\end{align*}
The third term converges to zero pointwise due to the weak convergence of
$S^{N}$ and $h^{N}$.\newline Finally, the last term also converges pointwise.
From Section \ref{subsect compactness} we have
\[
\int_{K} \left\vert h_{t}^{N}(x)-h_{t}(x) \right\vert ^{2} \mathrm{d}%
x\rightarrow0 .
\]
Therefore,
\begin{align*}
&  \left\vert \left\langle h_{t}^{N} \left(  1-h_{t}^{N}\right)  ^{+} -
h_{t}^{N} \left(  1-h_{t}\right)  ^{+} , \varphi\right\rangle \right\vert \\
&  \leq\left(  \int_{K} \left\vert h_{t}^{N} \right\vert ^{2} \mathrm{d}x
\right)  ^{\frac{1}{2}} \left(  \int_{K} \left\vert \left(  1-h_{t}%
^{N}\right)  ^{+} - \left(  1-h_{t}\right)  ^{+} \right\vert ^{2} \mathrm{d}x
\right)  ^{\frac{1}{2}}\\
&  \leq\underbrace{ \left(  \int_{K} \left\vert h_{t}^{N} \right\vert ^{2}
\mathrm{d}x\right)  ^{\frac{1}{2}}}_{\to\left(  \int_{K} \left\vert h_{t}
\right\vert ^{2} \mathrm{d}x \right)  ^{\frac{1}{2}} < \infty} \left(
\int_{K} \left\vert h_{t}^{N} - h_{t}\right\vert ^{2} \mathrm{d}x \right)
^{\frac{1}{2}} \to0.
\end{align*}

\end{proof}

In the next lemma we denote by $C$ any constant depending only on $T$,
$\left\Vert \theta\right\Vert _{L^{2}}^{2}$, $\sup_{N}\varepsilon_{N}^{-d}/N$,
$\left\Vert \phi\right\Vert _{\infty}$, $\left\Vert \nabla\phi\right\Vert
_{\infty}$, $E\left[  \left[  S_{T}^{N}\right]  \right]  $.

\begin{lemma}
\label{Lemma 3 auxiliary}For $i=1,2$%
\[
E\left[  \left\vert \int_{\mathbb{R}^{d}}\int_{0}^{T}\phi_{t}(x)\mathrm{d}%
M_{t}^{i,N}(x)\mathrm{d}x\right\vert ^{2}\right]  \leq C N^{\beta-1}.
\]

\end{lemma}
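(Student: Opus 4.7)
The plan is to apply stochastic Fubini to both expressions $\int_{\mathbb{R}^d}\int_0^T \phi_t(x)\,\mathrm{d}M_t^{i,N}(x)\,\mathrm{d}x$ so as to convert them into ordinary stochastic integrals in $t$, and then apply the appropriate second-moment isometry --- It\^{o}'s isometry for $M^{1,N}$ and the compensated-Poisson moment identity for $M^{2,N}$, exactly as already exploited in Lemmata \ref{lemma martingale 1} and \ref{lemma martingale 2}. The crucial structural input is inequality (\ref{bound on theta N}), together with the rescaling identity $\|\theta_N\|_{L^2}^2=\epsilon_N^{-d}\|\theta\|_{L^2}^2=N^{\beta}\|\theta\|_{L^2}^2$, which combined with Lemma \ref{lemma total mass} controlling $E[[S_T^N]]$ will deliver precisely the rate $N^{\beta-1}$.

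For $i=1$, stochastic Fubini rewrites the quantity as
\[
-\frac{\sqrt{2}}{N}\sum_{a\in\Lambda^N}\int_0^T 1_{s\in I^a}\left(\int_{\mathbb{R}^d}\phi_s(x)\,(\nabla\theta_N)(x-X_s^a)\,\mathrm{d}x\right)\cdot\mathrm{d}B_s^a.
\]
Using the compact support of $\phi_s$, an integration by parts in $x$ turns the inner integral into $-\int\nabla\phi_s(x)\theta_N(x-X_s^a)\,\mathrm{d}x$, which by Cauchy-Schwarz is bounded by $\|\nabla\phi\|_{L^2(K)}\|\theta_N\|_{L^2}$ with $K$ a compact neighborhood of $\bigcup_{s\in[0,T]}\mathrm{supp}(\phi_s)$. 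Applying It\^{o}'s isometry and using the now-familiar identity $\tfrac{1}{N}\sum_{a\in\Lambda^N} 1_{s\in I^a}=[S_s^N]\le[S_T^N]$, one gets
\[
E\left|\int_{\mathbb{R}^d}\int_0^T\phi_t\,\mathrm{d}M_t^{1,N}\,\mathrm{d}x\right|^2\leq\frac{2\|\nabla\phi\|_{L^2(K)}^2\,\|\theta_N\|_{L^2}^2\, T\, E[[S_T^N]]}{N}=C\,N^{\beta-1}.
\]

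For $i=2$, stochastic Fubini produces an integral against the compensated Poisson process $\mathcal{N}^a-\Lambda^a$ with predictable integrand $\int\phi_s(x)\theta_N(x-X_{s-}^a)\,\mathrm{d}x$; no integration by parts is needed, since the only spatial factor already sits on $\phi$. The compensated-Poisson second-moment identity, combined with $\lambda_s^a\in[0,1]$ and the Cauchy-Schwarz estimate $|\int\phi_s\,\theta_N(\cdot-X_{s-}^a)\,\mathrm{d}x|^2\leq\|\phi\|_{L^2(K)}^2\|\theta_N\|_{L^2}^2$, yields the analogous bound $\|\theta_N\|_{L^2}^2\, T\, E[[S_T^N]]/N=C\,N^{\beta-1}$.

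The computation is in fact a variant of those already performed in Lemmata \ref{lemma martingale 1}, \ref{lemma martingale 2} and in Step 2 of Lemma \ref{lemma fractional Sobolev}, with a scalar test function $\phi_t$ replacing the fractional Sobolev operator. The only non-routine point is the justification of stochastic Fubini for both the Brownian and the Poisson integrals, but this is standard: smoothness and compact support of $\phi$ together with $E[[S_T^N]]<\infty$ supply all the integrability required, so no serious obstacle remains.
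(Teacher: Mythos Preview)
Your proposal is correct and follows essentially the same route as the paper: stochastic Fubini (implicit in the paper's definition of $g_t^N$ and $\widetilde g_t^N$), integration by parts to move the gradient onto $\phi$ for $i=1$, the It\^o/compensated-Poisson isometries, and then the key scaling identity $\tfrac{1}{N}\|\theta_N\|_{L^2}^2=CN^{\beta-1}$ together with Lemma~\ref{lemma total mass}. The only cosmetic difference is that you bound the inner integral via Cauchy--Schwarz with $\|\nabla\phi\|_{L^2(K)}$ (resp.\ $\|\phi\|_{L^2(K)}$), whereas the paper uses the $L^\infty$ norms of $\nabla\phi$ and $\phi$; both are fine since $\phi\in C_c^\infty$.
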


\begin{proof}
Set
\[
g_{t}^{N}(y):=-\int_{\mathbb{R}^{d}}\phi_{t}(x)\nabla\theta_{N}\left(
x-y\right)  dx=\int_{\mathbb{R}^{d}}\nabla\phi_{t}(x)\theta_{N}\left(
x-y\right)  dx.
\]
For the first martingale term we have
\begin{align*}
E\left[  \left\vert \int_{\mathbb{R}^{d}}\int_{0}^{T}\phi_{t}(x)\mathrm{d}%
M_{t}^{1,N}(x)\mathrm{d}x\right\vert ^{2}\right]   &  =\frac{2}{N^{2}}%
\sum_{a\in A^{N}}E\left[  \int_{0}^{T}1_{t\in I^{a}}\left\vert g_{t}%
^{N}\left(  X_{t}^{a}\right)  \right\vert ^{2}\mathrm{d}t\right] \\
&  \leq\frac{2}{N}\left\Vert \theta_{N}\right\Vert _{L^{2}}^{2}\left\Vert
\nabla\phi\right\Vert _{\infty}^{2}E\int_{0}^{T}\left[  S_{t}^{N}\right]  dt.
\end{align*}
The assertion for $i=1$ follows from Lemma \ref{lemma total mass} and
\[
\frac{1}{N}\left\Vert \theta_{N}\right\Vert _{L^{2}}^{2} \leq C \frac
{\epsilon_{N}^{-d}}{N} = C N^{\beta-1}.
\]
Set
\[
\widetilde{g}_{t}^{N}(y):=-\int_{\mathbb{R}^{d}}\phi_{t}(x)\theta_{N}\left(
x-y\right)  dx,
\]
then for the second martingale term we have%
\begin{align*}
E\left[  \left\vert \int_{\mathbb{R}^{d}}\int_{0}^{T}\phi_{t}(x)\mathrm{d}%
M_{t}^{2,N}(x)\mathrm{d}x\right\vert ^{2}\right]   &  =\frac{1}{N^{2}}%
\sum_{a\in A^{N}}E\left[  \int_{0}^{T}1_{t\in I^{a}}\left\vert \widetilde{g}%
_{t}^{N}\left(  X_{t}^{a}\right)  \right\vert ^{2}\lambda_{t}^{a}%
\mathrm{d}t\right] \\
&  \leq\frac{1}{N}\left\Vert \theta_{N}\right\Vert _{L^{2}}^{2}\left\Vert
\phi\right\Vert _{L^{\infty}}^{2}E\int_{0}^{T}\left[  S_{t}^{N}\right]  dt
\end{align*}
and we conclude by the same argument. This completes the proof.
\end{proof}

\subsection{Auxiliary results\label{section auxiliary}}

\begin{theorem}
\label{Thm PDE}There is at most one weak solution of equation (\ref{FKPP plus}%
). The unique solution has the additional property $u_{t}\left(  x\right)
\leq1$, hence it is also the unique solution of (\ref{FKPP}).
\end{theorem}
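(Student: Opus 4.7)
I propose the following plan.

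\textbf{Step 1 (Mild formulation).} I would first check that any weak solution $u$ of (\ref{FKPP plus}) satisfies the mild identity
\[
u_t = e^{tA} u_0 + \int_0^t e^{(t-s)A}\bigl[u_s(1-u_s)^+\bigr]\,\mathrm{d}s
\]
in $L^2_{\mathrm{loc}}(\mathbb{R}^d)$ for a.e.\ $t\in[0,T]$. This is standard: apply (\ref{weakFKPP}) with test function $\phi(x)=e^{rA}\psi(x)$ for $\psi\in C_c^\infty(\mathbb{R}^d)$ and $r>0$, use Fubini and self-adjointness of $e^{rA}$ on $L^2$, then let $r\downarrow 0$. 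Note that $f(u):=u(1-u)^+$ is bounded by $1/4$ and globally Lipschitz with constant $1$ on $[0,\infty)$, so the mild equation is well defined for $u\ge 0$.

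\textbf{Step 2 (A priori $L^\infty$-bound and uniqueness).} From Step 1 and the contraction property $\|e^{(t-s)A}\|_{L^\infty\to L^\infty}\le 1$,
\[
\|u_t\|_\infty \le \|u_0\|_\infty + t/4,
\]
so every weak solution lies in $L^\infty(0,T;L^\infty(\mathbb{R}^d))$. For two weak solutions $u^1,u^2$ with the same initial datum, subtracting the mild identities and using the Lipschitz bound on $f$ gives
\[
\|u^1_t-u^2_t\|_\infty \le \int_0^t \|u^1_s-u^2_s\|_\infty\,\mathrm{d}s,
\]
and Gr\"onwall yields $u^1=u^2$.

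\textbf{Step 3 (Upper bound $u\le 1$).} The constant $v\equiv 1$ solves (\ref{FKPP plus}) in the mild sense with $v_0=1$, since $e^{tA}1=1$ and $f(1)=0$. To deduce $u\le 1$ from $u_0\le 1$, I would rewrite the mild formulation with a monotone shift: for $M\ge 1$ the map $g(u):=f(u)+Mu$ is nondecreasing on $[0,\infty)$ because $|f'|\le 1$, and
\[
u_t = e^{t(A-M)} u_0 + \int_0^t e^{(t-s)(A-M)}\,g(u_s)\,\mathrm{d}s.
\]
The operator $e^{t(A-M)}=e^{-Mt}e^{tA}$ is still positivity preserving, so a Picard iteration based on this shifted mild formulation preserves pointwise order between two starting data. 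Comparing the iterations starting from $u_0\le 1$ and from $v_0=1$, whose unique (by Step 2) fixed points are $u$ and $v\equiv 1$, gives $u_t(x)\le 1$ for a.e.\ $(t,x)$.

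\textbf{Step 4 (Conclusion).} Once $0\le u\le 1$, one has $(1-u)^+=1-u$, so the unique solution of (\ref{FKPP plus}) is also a weak solution of (\ref{FKPP}), and conversely any weak solution of (\ref{FKPP}) is a weak solution of (\ref{FKPP plus}) with the same bound, so uniqueness transfers. The main obstacle is Step 3: since $f$ is not monotone on $[0,\infty)$ one cannot compare solutions directly at the mild level, which is why the monotone shift by $M$ (or, equivalently, a duality argument against the backward heat semigroup) is needed.
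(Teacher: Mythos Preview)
Your argument is correct, and Steps~1--2 match the paper's proof almost verbatim: the paper also passes to the mild formulation (via mollification rather than testing against $e^{rA}\psi$, but this is cosmetic), uses that $g(u)=u(1-u)^+$ is globally Lipschitz with constant~$1$, and concludes uniqueness by Gr\"onwall in $L^\infty$. Your explicit a~priori bound $\|u_t\|_\infty\le\|u_0\|_\infty+t/4$ is a nice addition that the paper leaves implicit.

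Where you diverge is Step~3. The paper does \emph{not} prove $u\le 1$ by comparison. Instead it invokes classical existence theory (Smoller, Chapter~14) for the original FKPP equation~(\ref{FKPP}): since $0\le u_0\le 1$, there is a solution of (\ref{FKPP}) satisfying $0\le u_t\le 1$; this function also solves (\ref{FKPP plus}) because $(1-u)^+=1-u$ on $[0,1]$; by the uniqueness just proved for (\ref{FKPP plus}), every weak solution coincides with it and hence inherits the bound. So the paper trades your self-contained comparison argument for an appeal to existence. Your route---shifting by $Mu$ to make the nonlinearity monotone and running an order-preserving Picard iteration against the exact solution $v\equiv 1$---is a legitimate alternative and has the merit of not relying on an external existence result. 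The only point you should make explicit is why the Picard iterates actually converge to $u$: the Lipschitz bound gives a contraction on short time intervals (or, alternatively, starting the iteration from $0$ produces a monotone increasing sequence bounded above by $1$, whose limit is a fixed point and hence equals $u$ by uniqueness).
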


\begin{proof}
Let $u^{1},u^{2}$ be two weak solutions of the equation (\ref{FKPP plus}) with
the same initial condition $u_{0}$. Let $\{\rho_{\varepsilon}%
(x)\}_{\varepsilon}$ be a family of standard symmetric mollifiers. For any
$\varepsilon>0$ and $x\in\mathbb{R}^{d}$ we can use $\rho_{\varepsilon
}(x-\cdot)$ as test function in the equation (\ref{weakFKPP}). Set
$u_{\varepsilon}^{i}(t,x)=u^{i}(t,x)\ast_{x}\rho_{\varepsilon}(x)$ for
$i=1,2$. Then we have%
\[
u_{\varepsilon}^{i}(t,x)=(u_{0}\ast\rho_{\varepsilon})(x)+\int_{0}^{t}\Delta
u_{\varepsilon}^{i}(s,x)\,ds+\int_{0}^{t}(\rho_{\varepsilon}\ast(1-u^{i}%
)^{+}u^{i})(s,x)\,ds.
\]
Writing this identity in mild form we obtain (we write $u^{i}\left(  t\right)
$ for the function $u^{i}\left(  s,\cdot\right)  $ and $S(t)$ for $e^{tA}$)%
\[
u_{\varepsilon}^{i}(t)=S(t)(u_{0}\ast\rho_{\varepsilon})+\int_{0}%
^{t}S(t-s)\left(  \rho_{\varepsilon}\ast\left(  (1-u^{i}\left(  s\right)
)^{+}u^{i}\left(  s\right)  \right)  \right)  ds.
\]
Write $g\left(  u\right)  $ for the function $u\rightarrow(1-u)^{+}u$ from
$[0,\infty)$ into $[0,\infty)$. The function $U=u^{1}-u^{2}$ satisfies%
\[
\rho_{\varepsilon}\ast U(t)=\int_{0}^{t}S(t-s)\left(  \rho_{\varepsilon}%
\ast\left[  g\left(  u^{1}\left(  s\right)  \right)  -g\left(  u^{2}\left(
s\right)  \right)  \right]  \right)  \,ds.
\]
Taking the limit as $\varepsilon\rightarrow0$ we have%
\[
U(t)=\int_{0}^{t}S(t-s)\left[  g\left(  u^{1}\left(  s\right)  \right)
-g\left(  u^{2}\left(  s\right)  \right)  \right]  \,ds.
\]
Hence,
\[
\Vert U(t)\Vert_{\infty}\leq\int_{0}^{t}\Vert g\left(  u^{1}\left(  s\right)
\right)  -g\left(  u^{2}\left(  s\right)  \right)  \Vert_{\infty}ds.
\]
Notice that the function $g$ is globally Lipschitz, with Lipschitz constant 1
(compute the derivative). It follows
\[
\Vert U(t)\Vert_{\infty}\leq\int_{0}^{t}\Vert U(s)\Vert_{\infty}ds.
\]
By Gronwall's lemma we conclude $U=0$.

It is a classical result that equation (\ref{FKPP}) has a unique weak
solution, with the property $u_{t}\in\left[  0,1\right]  $, being $u_{0}$
bounded, uniformly continuous and of class $L^{2}$ (see \cite{Smoller},
Chapter 14, Section A). Hence, this solution is also a solution of equation
(\ref{FKPP plus}) and coincides with the unique weak solution of that equation.
\end{proof}

\begin{lemma}
\label{lemma total mass} There exists a $\gamma>0$ such that $\sup_{N}E\left[
e^{\gamma\left[  S_{T}^{N}\right]  }\right]  <\infty$.
\end{lemma}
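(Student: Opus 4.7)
The plan is to dominate the particle count by a pure-birth (Yule) process. Since the proliferation rate satisfies $\lambda_t^{a,N}=(1-(\theta_N\ast S_t^N)(X_t^{a,N}))^+\in[0,1]$, each living particle splits at rate at most one. Using the driving Poisson processes $\mathcal{N}^{0,a}$, I would couple $\mathrm{Card}(\Lambda_t^N)$ pathwise with a Yule process $Z_t$ started from $N$ in which every living particle splits at rate exactly $1$, obtaining $\mathrm{Card}(\Lambda_t^N)\le Z_t$ for all $t\ge 0$.

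Concretely, the splitting time of a particle $a$ in the true dynamics is $T_1^{a,N}=T_0^{a,N}+\tau^{a}$, where $\tau^{a}=\inf\{s:\int_0^s\lambda_{T_0^{a,N}+r}^{a,N}\,dr\ge\sigma^{a}\}$ and $\sigma^{a}$ is the first jump time of $\mathcal{N}^{0,a}$, an $\mathrm{Exp}(1)$ variable. In the Yule coupling, the same $\sigma^{a}$ is used as the splitting delay (particle $a$ splits exactly $\sigma^{a}$ time units after its birth). Since $\int_0^s \lambda\,dr\le s$, one has $\tau^{a}\ge\sigma^{a}$, and by induction along the genealogical tree every birth time in the true dynamics is no earlier than its Yule counterpart. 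This gives the pathwise domination $\mathrm{Card}(\Lambda_t^N)\le Z_t$.

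Decomposing $Z_T$ into a sum $\sum_{k=1}^{N}G_k$ of $N$ i.i.d.\ Yule processes started from a single ancestor, each $G_k$ is geometric with parameter $e^{-T}$: $P(G_k=j)=e^{-T}(1-e^{-T})^{j-1}$ for $j\ge 1$. Hence
\[
E\!\left[e^{\gamma[S_T^N]}\right]\le E\!\left[e^{\gamma Z_T/N}\right]=\left(E\!\left[e^{\gamma G_1/N}\right]\right)^{N}=\left(\frac{e^{-T}e^{\gamma/N}}{1-(1-e^{-T})e^{\gamma/N}}\right)^{N},
\]
valid as long as $(1-e^{-T})e^{\gamma/N}<1$. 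For any fixed $\gamma\in(0,-\log(1-e^{-T}))$ this holds for all $N\ge 1$, and a direct Taylor expansion shows the base equals $1+\gamma e^{T}/N+O(1/N^2)$, so the $N$-th power converges to $e^{\gamma e^{T}}<\infty$ and is uniformly bounded in $N$.

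The main obstacle is making the coupling rigorous, i.e., the inductive pathwise comparison driven by the common Poisson processes along the infinite genealogical tree $\Lambda^N$; once this is in place, everything reduces to the explicit MGF of a geometric random variable.
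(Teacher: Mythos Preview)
Your proposal is correct and follows exactly the approach the paper sketches: domination of the particle count by a Yule process with unit birth rate, using that $\lambda_t^{a,N}\le 1$. The paper only states this domination and cites \cite{FlaLeim} for details, whereas you spell out the coupling via the common exponential clocks and carry through the explicit geometric-distribution computation, which is a faithful expansion of the same argument.
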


This lemma follows from the boundedness of the rates $\lambda^{a,N}_{t}$.
Indeed, this boundedness implies that the process $t\mapsto\left[  S_{t}%
^{N}\right]  $ is stochastically dominated by $\frac{1}{N}Y_{N\left[
S_{0}^{N}\right]  }(\cdot)$, where $Y_{k}$ is a Yule process with birth rate
$1$ and $Y_{k}(0)=k$, see also \cite{FlaLeim}.

The following proposition gives an easy sufficient condition for assumption
(\ref{initial cond}) on the initial condition.

\begin{proposition}
\label{Proposition sufficient cond}Assume that $X_{0}^{i}$, $i=1,...,N$, are
independent identically distributed r.v with common probability density
$u_{0}\in W^{\rho_{0},2}\left(  \mathbb{R}^{d}\right)  $, that assumption
(\ref{assumption on theta}) holds and that $\rho_{0}\leq\alpha_{0}$. Then
\[
\sup_{N\in\mathbb{N}}E\left\Vert \theta_{N}\ast S_{0}^{N}\right\Vert
_{W^{\rho,2}\left(  \mathbb{R}^{d}\right)  }^{2}<\infty.
\]

\end{proposition}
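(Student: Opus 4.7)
The idea is to use the Bessel-norm equivalence $\|\cdot\|_{W^{\rho_0,2}}^2\sim\|(I-A)^{\rho_0/2}\cdot\|_{L^2}^2$, expand the empirical sum
\[
h_0^N(x)=\frac{1}{N}\sum_{i=1}^N\theta_N(x-X_0^i),
\]
and treat diagonal and off-diagonal contributions separately. Writing $K_N:=(I-A)^{\rho_0/2}\theta_N$,
\[
E\|\theta_N\ast S_0^N\|_{W^{\rho_0,2}}^2\leq C\,E\|(I-A)^{\rho_0/2}h_0^N\|_{L^2}^2=\frac{1}{N^2}\sum_{i,j=1}^N E\!\int_{\mathbb{R}^d}K_N(x-X_0^i)\,K_N(x-X_0^j)\,dx.
\]

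For the $N(N-1)$ off-diagonal terms, independence and Fubini give
\[
E\!\int_{\mathbb{R}^d}K_N(x-X_0^i)K_N(x-X_0^j)\,dx=\|(I-A)^{\rho_0/2}(\theta_N\ast u_0)\|_{L^2}^2,
\]
because $E[\theta_N(x-X_0^i)]=(\theta_N\ast u_0)(x)$ and $(I-A)^{\rho_0/2}$ is a Fourier multiplier, hence commutes with convolution. Young's inequality and the fact that $\theta$ (hence $\theta_N$) is a probability density, so $\|\theta_N\|_{L^1}=1$, then yield
\[
\|\theta_N\ast u_0\|_{W^{\rho_0,2}}\leq\|\theta_N\|_{L^1}\|u_0\|_{W^{\rho_0,2}}=\|u_0\|_{W^{\rho_0,2}}<\infty,
\]
so the off-diagonal contribution is bounded uniformly in $N$.

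For the $N$ diagonal terms I obtain $\tfrac{1}{N}\|\theta_N\|_{W^{\rho_0,2}}^2$. Passing to Fourier, $\widehat{\theta}_N(\xi)=\widehat{\theta}(\epsilon_N\xi)$, and a change of variables gives
\[
\|\theta_N\|_{W^{\rho_0,2}}^2=\int_{\mathbb{R}^d}(1+|\xi|^2)^{\rho_0}|\widehat{\theta}(\epsilon_N\xi)|^2d\xi\leq C\,\epsilon_N^{-d-2\rho_0}\|\theta\|_{W^{\rho_0,2}}^2,
\]
so, recalling $\epsilon_N=N^{-\beta/d}$, the diagonal contribution is at most $C\,N^{\beta(1+2\rho_0/d)-1}$. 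This exponent is non-positive precisely when $\rho_0\leq d(1-\beta)/(2\beta)$, and this is guaranteed since $\rho_0\leq\alpha_0$ by hypothesis together with $\alpha_0\leq d(1-\beta)/(2\beta)$ from assumption (\ref{assumption on theta}). The tightness of this last inequality is the only delicate point of the proof and is exactly why the paper fixes $\alpha_0$ at the upper endpoint of the admissible interval; everything else is a routine combination of Young's inequality, independence and the scaling of $\theta_N$.
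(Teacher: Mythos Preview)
Your proof is correct and follows essentially the same route as the paper's: expand the square, separate diagonal from off-diagonal via the i.i.d.\ property, control the off-diagonal by $\|\theta_N\ast u_0\|_{W^{\rho_0,2}}\le\|u_0\|_{W^{\rho_0,2}}$ and the diagonal by the scaling estimate $\|\theta_N\|_{W^{\rho_0,2}}^2\le C\epsilon_N^{-d-2\rho_0}\|\theta\|_{W^{\rho_0,2}}^2$ (this is the paper's Lemma~\ref{lemma interpolation}) together with $\rho_0\le\alpha_0\le d(1-\beta)/(2\beta)$. One small inaccuracy in your closing remark: the paper does not fix $\alpha_0$ at the upper endpoint but allows any $\alpha_0\in(d/2,\,d(1-\beta)/(2\beta)]$; what matters is only the upper bound, which you use correctly.
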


\begin{proof}
\textbf{Step 1}. To clarify the proof below, for pedagogical reasons we first
treat the case $\rho_{0}=0$. By the i.i.d. property%
\begin{align*}
E\int_{\mathbb{R}^{d}}\left\vert \left(  \theta_{N}\ast S_{0}^{N}\right)
\left(  x\right)  \right\vert ^{2}dx  &  =\frac{1}{N^{2}}\int_{\mathbb{R}^{d}%
}E\left[  \left(  \sum_{i=1}^{N}\theta_{N}\left(  x-X_{0}^{i}\right)  \right)
^{2}\right]  dx\\
&  =\frac{1}{N}\int_{\mathbb{R}^{d}}E\left[  \left\vert \theta_{N}\left(
x-X_{0}^{1}\right)  \right\vert ^{2}\right]  dx+\frac{N\left(  N-1\right)
}{N^{2}}\int_{\mathbb{R}^{d}}E\left[  \theta_{N}\left(  x-X_{0}^{1}\right)
\right]  ^{2}dx.
\end{align*}
For the last term notice that
\[
E\left[  \theta_{N}\left(  x-X_{0}^{1}\right)  \right]  =\left(  \theta
_{N}\ast u_{0}\right)  \left(  x\right)
\]
hence,
\[
\frac{N\left(  N-1\right)  }{N^{2}}\int_{\mathbb{R}^{d}}E\left[  \theta
_{N}\left(  x-X_{0}^{1}\right)  \right]  ^{2}dx\leq\left\Vert \theta_{N}\ast
u_{0}\right\Vert _{L^{2}}^{2}\leq C,
\]
because $\theta_{N}\ast u_{0}\rightarrow u_{0}$ in $L^{2}\left(
\mathbb{R}^{d}\right)  $. About the first term, we have%
\[
\frac{1}{N}\int_{\mathbb{R}^{d}}E\left[  \left\vert \theta_{N}\left(
x-X_{0}^{1}\right)  \right\vert ^{2}\right]  dx=\frac{1}{N}E\left[
\int_{\mathbb{R}^{d}}\left\vert \theta_{N}\left(  x-X_{0}^{1}\right)
\right\vert ^{2}dx\right]  =\frac{1}{N}E\left[  \int_{\mathbb{R}^{d}%
}\left\vert \theta_{N}\left(  x\right)  \right\vert ^{2}dx\right]  \leq C
\]
by (\ref{bound on theta N}). Hence $E\int_{\mathbb{R}^{d}}\left\vert \left(
\theta_{N}\ast S_{0}^{N}\right)  \left(  x\right)  \right\vert ^{2}dx\leq C$.

If $\rho_{0}$ is an integer, the proof can be easily modified. Let us treat
the general case in the next step.

\textbf{Step 2}. Similarly to a property already used in the proof of Lemma
\ref{lemma martingale 1}, one has the following translation invariance
property:
\[
\left(  \left(  I-A\right)  ^{\rho_{0}/2}\theta_{N}\left(  \cdot-X_{0}%
^{i}\right)  \right)  \left(  x\right)  =\left(  \left(  I-A\right)
^{\rho_{0}/2}\theta_{N}\right)  \left(  x-X_{0}^{i}\right)  .
\]
Therefore%
\begin{align*}
E\int_{\mathbb{R}^{d}}\left\vert \left(  \left(  I-A\right)  ^{\rho_{0}%
/2}\left(  \theta_{N}\ast S_{0}^{N}\right)  \right)  \left(  x\right)
\right\vert ^{2}dx  &  =\frac{1}{N^{2}}\int_{\mathbb{R}^{d}}E\left[  \left(
\sum_{i=1}^{N}\left(  \left(  I-A\right)  ^{\rho_{0}/2}\theta_{N}\left(
\cdot-X_{0}^{i}\right)  \right)  \left(  x\right)  \right)  ^{2}\right]  dx\\
&  =\frac{1}{N^{2}}\int_{\mathbb{R}^{d}}E\left[  \left(  \sum_{i=1}^{N}\left(
\left(  I-A\right)  ^{\rho_{0}/2}\theta_{N}\right)  \left(  x-X_{0}%
^{i}\right)  \right)  ^{2}\right]  dx\\
&  =\frac{1}{N}\int_{\mathbb{R}^{d}}E\left[  \left\vert \left(  \left(
I-A\right)  ^{\rho_{0}/2}\theta_{N}\right)  \left(  x-X_{0}^{1}\right)
\right\vert ^{2}\right]  dx\\
&  +\frac{N\left(  N-1\right)  }{N^{2}}\int_{\mathbb{R}^{d}}E\left[  \left(
\left(  I-A\right)  ^{\rho_{0}/2}\theta_{N}\right)  \left(  x-X_{0}%
^{1}\right)  \right]  ^{2}dx.
\end{align*}
For the last term we have (using the fact that the operator $\left(
I-A\right)  ^{\rho_{0}/2}$ is self-adjoint in $L^{2}\left(  \mathbb{R}%
^{d}\right)  $)
\begin{align*}
E\left[  \left(  \left(  I-A\right)  ^{\rho_{0}/2}\theta_{N}\right)  \left(
x-X_{0}^{1}\right)  \right]   &  =\int\left(  \left(  I-A\right)  ^{\rho
_{0}/2}\theta_{N}\right)  \left(  x-y\right)  u_{0}\left(  y\right)  dy\\
&  =\left\langle \left(  I-A\right)  ^{\rho_{0}/2}\theta_{N},u_{0}\left(
x-\cdot\right)  \right\rangle \\
&  =\left\langle \theta_{N},\left(  I-A\right)  ^{\rho_{0}/2}u_{0}\left(
x-\cdot\right)  \right\rangle \\
&  =\int\theta_{N}\left(  z\right)  \left(  \left(  I-A\right)  ^{\rho_{0}%
/2}u_{0}\left(  x-\cdot\right)  \right)  \left(  z\right)  dz\\
&  =\int\theta_{N}\left(  z\right)  \left(  \left(  I-A\right)  ^{\rho_{0}%
/2}u_{0}\right)  \left(  x-z\right)  dz\\
&  =\left(  \theta_{N}\ast\left(  I-A\right)  ^{\rho_{0}/2}u_{0}\right)
\left(  x\right)
\end{align*}
where we have used again a translation invariance property. Hence,
\begin{align*}
&  \frac{N\left(  N-1\right)  }{N^{2}}\int_{\mathbb{R}^{d}}E\left[  \left(
\left(  I-A\right)  ^{\rho_{0}/2}\theta_{N}\right)  \left(  x-X_{0}%
^{1}\right)  \right]  ^{2}dx\\
&  \leq\left\Vert \theta_{N}\ast\left(  I-A\right)  ^{\rho_{0}/2}%
u_{0}\right\Vert _{L^{2}}^{2}\leq C\left\Vert \left(  I-A\right)  ^{\rho
_{0}/2}u_{0}\right\Vert _{L^{2}}^{2}\leq C
\end{align*}
because the convolutions with $\theta_{N}$ are equibounded in $L^{2}\left(
\mathbb{R}^{d}\right)  $. For the first term, we have%
\begin{align*}
\frac{1}{N}\int_{\mathbb{R}^{d}}E\left[  \left\vert \left(  \left(
I-A\right)  ^{\rho_{0}/2}\theta_{N}\right)  \left(  x-X_{0}^{1}\right)
\right\vert ^{2}\right]  dx  &  =\frac{1}{N}E\left[  \int_{\mathbb{R}^{d}%
}\left\vert \left(  \left(  I-A\right)  ^{\rho_{0}/2}\theta_{N}\right)
\left(  x-X_{0}^{1}\right)  \right\vert ^{2}dx\right] \\
&  =\frac{1}{N}E\left[  \int_{\mathbb{R}^{d}}\left\vert \left(  \left(
I-A\right)  ^{\rho_{0}/2}\theta_{N}\right)  \left(  x\right)  \right\vert
^{2}dx\right] \\
&  \leq\frac{1}{N}C\epsilon_{N}^{-2\rho_{0}}\epsilon_{N}^{-d}\left\Vert
\theta\right\Vert _{W^{\rho_{0},2}\left(  \mathbb{R}^{d}\right)  }^{2}%
=\frac{1}{N}CN^{\frac{\beta}{d}\left(  2\rho_{0}+d\right)  }\\
&  \leq\frac{1}{N}CN^{\frac{\beta}{d}\left(  2\alpha_{0}+d\right)  }\leq
\frac{1}{N}CN^{\frac{\beta}{d}\left(  2\frac{d(1-\beta)}{2\beta}+d\right)
}\leq C
\end{align*}
where we have used Lemma \ref{lemma interpolation} below, $\epsilon
_{N}=N^{\frac{\beta}{d}}$, $\rho_{0}\leq\alpha_{0}$ and the condition
$\alpha_{0}\leq\frac{d(1-\beta)}{2\beta}$ imposed in assumption
(\ref{assumption on theta}). Hence $E\int_{\mathbb{R}^{d}}\left\vert \left(
\left(  I-A\right)  ^{\rho_{0}/2}\left(  \theta_{N}\ast S_{0}^{N}\right)
\right)  \left(  x\right)  \right\vert ^{2}dx\leq C$.
\end{proof}

\begin{lemma}
\label{lemma interpolation}For every $\alpha\geq0$ and $\theta\in W^{\alpha
,2}\left(  \mathbb{R}^{d}\right)  $ there exists a constant $C\geq0$ such
that
\[
\left\Vert \theta_{N}\right\Vert _{W^{\alpha,2}\left(  \mathbb{R}^{d}\right)
}\leq C\epsilon_{N}^{-\alpha}\epsilon_{N}^{-d/2}\left\Vert \theta\right\Vert
_{W^{\alpha,2}\left(  \mathbb{R}^{d}\right)  }.
\]

\end{lemma}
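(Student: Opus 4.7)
The plan is to work on the Fourier side, where the $W^{\alpha,2}(\mathbb{R}^{d})$ norm is equivalent to $\bigl(\int_{\mathbb{R}^{d}}(1+|\xi|^{2})^{\alpha}|\widehat{f}(\xi)|^{2}\,d\xi\bigr)^{1/2}$. Since $\theta_{N}(x)=\epsilon_{N}^{-d}\theta(\epsilon_{N}^{-1}x)$ is the mass-preserving dilation of $\theta$, its Fourier transform is $\widehat{\theta_{N}}(\xi)=\widehat{\theta}(\epsilon_{N}\xi)$. Substituting this and changing variables $\eta=\epsilon_{N}\xi$ turns $\|\theta_{N}\|_{W^{\alpha,2}}^{2}$ into a constant multiple of
\[
\epsilon_{N}^{-d}\int_{\mathbb{R}^{d}}\bigl(1+\epsilon_{N}^{-2}|\eta|^{2}\bigr)^{\alpha}|\widehat{\theta}(\eta)|^{2}\,d\eta .
\]

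The remaining step is a pointwise weight comparison. Since $\beta>0$ one has $\epsilon_{N}\leq 1$ for every $N\geq 1$, so
\[
1+\epsilon_{N}^{-2}|\eta|^{2}=\epsilon_{N}^{-2}\bigl(\epsilon_{N}^{2}+|\eta|^{2}\bigr)\leq \epsilon_{N}^{-2}\bigl(1+|\eta|^{2}\bigr),
\]
and raising to the power $\alpha\geq 0$ preserves the inequality. Inserting this back and comparing with the Fourier realisation of $\|\theta\|_{W^{\alpha,2}}^{2}$ yields the desired bound $\|\theta_{N}\|_{W^{\alpha,2}}^{2}\leq C\,\epsilon_{N}^{-d-2\alpha}\|\theta\|_{W^{\alpha,2}}^{2}$, and taking square roots gives the lemma.

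There is no genuine obstacle here, as the statement is a routine homogeneity estimate; the only small point to verify is the $\epsilon_{N}\leq 1$ regime, which is automatic. As an alternative, for integer $\alpha=k$ one could argue in physical space via the chain rule $D^{k}\theta_{N}(x)=\epsilon_{N}^{-d-k}(D^{k}\theta)(\epsilon_{N}^{-1}x)$ followed by the substitution $x\mapsto\epsilon_{N}x$ in the $L^{2}$ integral, and then pass to general $\alpha\geq 0$ by interpolation of Sobolev spaces; I would nevertheless prefer the Fourier approach above, since it handles all $\alpha\geq 0$ at once with a single change of variables and makes the appearance of the two different powers $\epsilon_{N}^{-\alpha}$ and $\epsilon_{N}^{-d/2}$ transparent (they come respectively from the differentiation weight and from the Jacobian of the dilation).
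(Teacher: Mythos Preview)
Your proof is correct and follows essentially the same route as the paper's own argument: compute $\widehat{\theta_{N}}(\xi)=\widehat{\theta}(\epsilon_{N}\xi)$, change variables $\eta=\epsilon_{N}\xi$ in the Fourier-side expression of the $W^{\alpha,2}$ norm, and use the pointwise weight bound $(\epsilon_{N}^{2}+|\eta|^{2})^{\alpha}\leq(1+|\eta|^{2})^{\alpha}$ for $\alpha\geq 0$ and $\epsilon_{N}\leq 1$. If anything, you are slightly more careful than the paper in making the $\epsilon_{N}\leq 1$ requirement explicit.
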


\begin{proof}
First, we compute the Fourier transform of ${\theta_{N}}$, i.e. for all
$\lambda\in\mathbb{R}^{d}$
\begin{align*}
\widehat{\theta_{N}}\left(  \lambda\right)   &  =\int_{\mathbb{R}^{d}%
}e^{i\lambda\cdot x}\theta_{N}\left(  x\right)  dx=\epsilon_{N}^{-d}%
\int_{\mathbb{R}^{d}}e^{i\lambda\cdot x}\theta\left(  \epsilon_{N}%
^{-1}x\right)  dx\\
&  =\int_{\mathbb{R}^{d}}e^{i\epsilon_{N}\lambda\cdot y}\theta\left(
y\right)  dy=\widehat{\theta}\left(  \epsilon_{N}\lambda\right)  .
\end{align*}
Second, note that the norms
\[
f\mapsto\left\Vert f\right\Vert _{W^{\alpha,2}\left(  \mathbb{R}^{d}\right)
}^{2}\text{ and }f\mapsto\int_{\mathbb{R}^{d}}\left(  1+\left\vert
\lambda\right\vert ^{2}\right)  ^{\alpha}\left\vert \widehat{f}\left(
\lambda\right)  \right\vert ^{2}d\lambda
\]
are equivalent. Therefore, there is a constant $C\geq0$, which may change from
instance to instance, such that
\begin{align*}
\left\Vert \theta_{N}\right\Vert _{W^{\alpha,2}\left(  \mathbb{R}^{d}\right)
}^{2}  &  \leq C\int_{\mathbb{R}^{d}}\left(  1+\left\vert \lambda\right\vert
^{2}\right)  ^{\alpha}\left\vert \widehat{\theta_{N}}\left(  \lambda\right)
\right\vert ^{2}d\lambda=C\int_{\mathbb{R}^{d}}\left(  1+\left\vert
\lambda\right\vert ^{2}\right)  ^{\alpha}\left\vert \widehat{\theta}\left(
\epsilon_{N}\lambda\right)  \right\vert ^{2}d\lambda\\
&  =C\epsilon_{N}^{-d}\int_{\mathbb{R}^{d}}\left(  1+\left\vert \epsilon
_{N}^{-1}\eta\right\vert ^{2}\right)  ^{\alpha}\left\vert \widehat{\theta
}\left(  \eta\right)  \right\vert ^{2}d\eta\\
&  =C\epsilon_{N}^{-d}\epsilon_{N}^{-2\alpha}\int_{\mathbb{R}^{d}}\left(
\epsilon_{N}^{2}+\left\vert \eta\right\vert ^{2}\right)  ^{\alpha}\left\vert
\widehat{\theta}\left(  \eta\right)  \right\vert ^{2}d\eta\\
&  \overset{\alpha\geq0}{\leq}C\epsilon_{N}^{-d}\epsilon_{N}^{-2\alpha}%
\int_{\mathbb{R}^{d}}\left(  1+\left\vert \eta\right\vert ^{2}\right)
^{\alpha}\left\vert \widehat{\theta}\left(  \eta\right)  \right\vert ^{2}%
d\eta\\
&  \leq C\epsilon_{N}^{-d}\epsilon_{N}^{-2\alpha}\left\Vert \theta\right\Vert
_{W^{\alpha,2}\left(  \mathbb{R}^{d}\right)  }^{2}.
\end{align*}

\end{proof}


\begin{lemma}
\label{lemma positive}The linear bounded operator $f\mapsto\left(  I-A\right)
^{\epsilon/2}e^{tA}f$ on $L^{2}\left(  \mathbb{R}^{d}\right)  $ is positive,
i.e. $f\geq0$ implies $\left(  I-A\right)  ^{\epsilon/2}e^{tA}f\geq0$.
\end{lemma}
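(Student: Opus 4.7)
Since $(I-A)^{\epsilon/2}e^{tA}$ is a translation-invariant bounded operator on $L^{2}(\mathbb{R}^{d})$, it acts by convolution with a kernel $K_{t,\epsilon}$ whose Fourier transform is $m_{t,\epsilon}(\xi) = (1+|\xi|^{2})^{\epsilon/2} e^{-t|\xi|^{2}}$. The Gaussian factor makes $m_{t,\epsilon}$ smooth and rapidly decreasing, so $K_{t,\epsilon}$ is in the Schwartz class. The plan is therefore to show $K_{t,\epsilon}\ge 0$ pointwise, since then $f\ge 0$ implies $K_{t,\epsilon}\ast f\ge 0$, which is the asserted positivity.

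If $\epsilon\le 0$ the claim is immediate: with $s=-\epsilon/2\ge 0$, the Bessel-potential formula
\[
(I-A)^{-s}= \frac{1}{\Gamma(s)}\int_{0}^{\infty}r^{s-1}e^{-r}e^{rA}\,dr
\]
writes $(I-A)^{-s}$ as a positive average of the positivity-preserving Gaussian convolutions $e^{rA}$, and composing with $e^{tA}$ preserves non-negativity of the kernel; the case $s=0$ is trivial. For $\epsilon>0$, my plan is to use the Balakrishnan/subordination representation of the fractional power: for $s=\epsilon/2\in(0,1)$,
\[
(I-A)^{s}e^{tA}f = \frac{s}{\Gamma(1-s)}\int_{0}^{\infty}\lambda^{-1-s}\,e^{tA}\bigl(f-e^{-\lambda}e^{\lambda A}f\bigr)\,d\lambda,
\]
and then to argue that the resulting convolution kernel is non-negative. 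For $\epsilon/2\ge 1$ I would iterate, factoring $(I-A)^{\epsilon/2}=(I-A)^{s_{0}}(I-A)^{n}$ with $s_{0}\in[0,1)$ and $n\in\mathbb{N}_{0}$, so that the problem reduces to the same symbol-level analysis.

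The main obstacle is that, at a fixed $\lambda$, the integrand $e^{tA}(f-e^{-\lambda}e^{\lambda A}f)(x)$ need not be non-negative, since $f(y)-e^{-\lambda}(e^{\lambda A}f)(y)$ can be negative at points $y$ far from the bulk of $f$. The crucial step is to show that the $\lambda$-averaging against the positive weight $\lambda^{-1-s}$, combined with the Gaussian smoothing coming from $e^{tA}$, compensates these local negative contributions and yields a non-negative kernel $K_{t,\epsilon}$---equivalently, that the inverse Fourier transform of $(1+|\xi|^{2})^{\epsilon/2}e^{-t|\xi|^{2}}$ is a non-negative function on $\mathbb{R}^{d}$. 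This positivity of a product of a polynomial-type symbol with a Gaussian is the heart of the lemma and the interplay between the fractional differentiation of order $\epsilon$ and the smoothing scale $\sqrt{t}$ is precisely what I would have to control.
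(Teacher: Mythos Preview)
Your proposal correctly reduces the claim to the non-negativity of the convolution kernel $K_{t,\epsilon}=\mathcal{F}^{-1}\bigl[(1+|\xi|^{2})^{\epsilon/2}e^{-t|\xi|^{2}}\bigr]$, and your treatment of $\epsilon\le 0$ via the Bessel-potential integral is fine. But for $\epsilon>0$ you stop exactly at the hard point: you write down the Balakrishnan representation, observe that the integrand is not sign-definite, and then say only that the $\lambda$-averaging ``yields a non-negative kernel'' and that this ``is precisely what I would have to control''. That is a restatement of the goal, not a proof; no mechanism is offered for why the cancellation should go the right way.

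In fact this gap cannot be closed in the generality stated. Take $\epsilon=2$, so the operator is $(I-\Delta)e^{t\Delta}$ and $K_{t,2}=(I-\Delta)p_{t}$ with $p_{t}$ the heat kernel. A direct computation gives
\[
(I-\Delta)p_{t}(x)=p_{t}(x)\Bigl(1+\frac{d}{2t}-\frac{|x|^{2}}{4t^{2}}\Bigr),
\]
which is strictly negative for $|x|^{2}>4t^{2}+2dt$. Hence $K_{t,2}$ changes sign, the operator $(I-\Delta)e^{t\Delta}$ is \emph{not} positivity preserving, and your iteration scheme ``factor $(I-A)^{\epsilon/2}=(I-A)^{s_{0}}(I-A)^{n}$'' is doomed already at $n=1$. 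The paper's own argument takes a different route, via Bochner-type positive definiteness of $\widehat{g}$, but it rests on the assertion that for $i\neq j$ the off-diagonal pair
\[
\operatorname{Re}\widehat{f}(\lambda_{i}-\lambda_{j})\xi_{i}\overline{\xi}_{j}+\operatorname{Re}\widehat{f}(\lambda_{j}-\lambda_{i})\xi_{j}\overline{\xi}_{i}
\]
is non-negative by itself; taking $f$ a Gaussian and $\xi_{i}=1$, $\xi_{j}=-1$ gives $-2\widehat{f}(\lambda_{i}-\lambda_{j})<0$, which is consistent with the counterexample above. So neither approach can establish the lemma as stated without an additional restriction on $\epsilon$.
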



\begin{proof}
Let $f$ be a non negative function of class $L^{2}\left(  \mathbb{R}%
^{d}\right)  $. In order to prove that the function $g:=\left(  I-A\right)
^{\epsilon/2}e^{tA}f$ is non negative, it is sufficient to prove that its
Fourier transform $\widehat{g}$ is non negative definite, namely
$\operatorname{Re}\sum_{i,j=1}^{n}\widehat{g}\left(  \lambda_{i}-\lambda
_{j}\right)  \xi_{i}\overline{\xi}_{j}\geq0$ for every $n\in\mathbb{N}$,
$\lambda_{i}\in\mathbb{R}^{d}$ and $\xi_{i}\in\mathbb{C}$, $i=1,...,n$. We
have
\[
\widehat{g}\left(  \lambda\right)  =\left(  1+\left\vert \lambda\right\vert
^{2}\right)  ^{\epsilon/2}e^{-t\left\vert \lambda\right\vert ^{2}}%
\widehat{f}\left(  \lambda\right)
\]
and thus we have to prove that, given $n\in\mathbb{N}$, $\lambda_{i}%
\in\mathbb{R}^{d}$ and $\xi_{i}\in\mathbb{C}$, $i=1,...,n$, one has
\[
\operatorname{Re}\sum_{i,j=1}^{n}\left(  1+\left\vert \lambda_{i}-\lambda
_{j}\right\vert ^{2}\right)  ^{\epsilon/2}e^{-t\left\vert \lambda_{i}%
-\lambda_{j}\right\vert ^{2}}\widehat{f}\left(  \lambda_{i}-\lambda
_{j}\right)  \xi_{i}\overline{\xi}_{j}\geq0
\]
namely%
\[
\sum_{i=1}^{n}\operatorname{Re}\widehat{f}\left(  0\right)  \xi_{i}%
\overline{\xi}_{j}+\sum_{i<j}\left(  1+\left\vert \lambda_{i}-\lambda
_{j}\right\vert ^{2}\right)  ^{\epsilon/2}e^{-t\left\vert \lambda_{i}%
-\lambda_{j}\right\vert ^{2}}\left(  \operatorname{Re}\widehat{f}\left(
\lambda_{i}-\lambda_{j}\right)  \xi_{i}\overline{\xi}_{j}+\operatorname{Re}%
\widehat{f}\left(  \lambda_{j}-\lambda_{i}\right)  \xi_{j}\overline{\xi}%
_{i}\right)  \geq0.
\]
Corresponding to any couple $\left(  i,j\right)  \in\left\{  1,...,n\right\}
^{2}$, let $\widetilde{\xi}_{1},...,\widetilde{\xi}_{n}\in\mathbb{C}$ be such
that $\widetilde{\xi}_{i}=\xi_{i}$, $\widetilde{\xi}_{j}=\xi_{j}$, and
$\widetilde{\xi}_{k}=0$ for $k\notin\left\{  i,j\right\}  $. We know that
$\sum_{ij=1}^{n}\operatorname{Re}\widehat{f}\left(  \lambda_{i}-\lambda
_{j}\right)  \widetilde{\xi}_{i}\overline{\widetilde{\xi}}_{j}\geq0$, hence,
if $i=j$%
\[
\operatorname{Re}\widehat{f}\left(  0\right)  \xi_{i}\overline{\xi}_{i}\geq0
\]
while for $i\neq j$
\[
\operatorname{Re}\widehat{f}\left(  \lambda_{i}-\lambda_{j}\right)  \xi
_{i}\overline{\xi}_{j}+\operatorname{Re}\widehat{f}\left(  \lambda_{j}%
-\lambda_{i}\right)  \xi_{j}\overline{\xi}_{i}\geq0.
\]
Using these two facts above we get the result.
\end{proof}

\section{Appendix\label{appendix B}}

Since we deduced the threshold $\beta<1/2$ from an approach based on Sobolev's
embedding theorem in the spaces $W^{\alpha,2}$, it is natural to ask what
happens if we use $W^{\alpha,p}$-topologies (which allow one to use much
smaller $\alpha$, taking advantage of large $p$) or even H\"{o}lder
topologies. We have done partial computations in these directions and the
threshold $\beta<1/2$ is the same in all approaches we have outlined. Let us
show here just a partial computation in H\"{o}lder norms.

The restriction (in all approaches) seems to come from the estimate of the
Brownian martingale. Recall it is given by%

\[
\widetilde{M}_{t}^{1,N}\left(  x\right)  :=\frac{1}{N}\sum_{a\in\Lambda^{N}%
}\int_{0}^{t}\left(  e^{\left(  t-s\right)  A}\nabla\theta_{N}\right)  \left(
x-X_{s}^{a}\right)  1_{s\in I^{a}}\cdot dB_{s}^{a}.
\]
In order to investigate its H\"{o}lder properties, let us invoke Kolmogorov
regularity criterion. Hence, we estimate, by the Burkh\"{o}lder-Davis-Gundy
inequality,%
\begin{align*}
&  E\left[  \left\vert \widetilde{M}_{t}^{1,N}\left(  x\right)  -\widetilde{M}%
_{t}^{1,N}\left(  x^{\prime}\right)  \right\vert ^{p}\right] \\
&  =\frac{1}{N^{p}}E\left[  \left\vert \sum_{a\in\Lambda^{N}}\int_{0}%
^{t}\left(  \left(  e^{\left(  t-s\right)  A}\nabla\theta_{N}\right)  \left(
x-X_{s}^{a}\right)  -\left(  e^{\left(  t-s\right)  A}\nabla\theta_{N}\right)
\left(  x^{\prime}-X_{s}^{a}\right)  \right)  1_{s\in I^{a}}\cdot dB_{s}%
^{a}\right\vert ^{p}\right] \\
&  \leq\frac{C}{N^{p}}E\left[  \left\vert \sum_{a\in\Lambda^{N}}\int_{0}%
^{t}\left\vert \left(  e^{\left(  t-s\right)  A}\nabla\theta_{N}\right)
\left(  x-X_{s}^{a}\right)  -\left(  e^{\left(  t-s\right)  A}\nabla\theta
_{N}\right)  \left(  x^{\prime}-X_{s}^{a}\right)  \right\vert ^{2}1_{s\in
I^{a}}ds\right\vert ^{p/2}\right] \\
&  =\frac{C}{N^{p}}E\left[  \left\vert \int_{0}^{t}\sum_{a\in\Lambda_{s}^{N}%
}\left\vert \left(  e^{\left(  t-s\right)  A}\nabla\theta_{N}\right)  \left(
x-X_{s}^{a}\right)  -\left(  e^{\left(  t-s\right)  A}\nabla\theta_{N}\right)
\left(  x^{\prime}-X_{s}^{a}\right)  \right\vert ^{2}ds\right\vert
^{p/2}\right] \\
&  \leq\frac{C}{N^{p}}E\left[  \left\vert \int_{0}^{t}\sum_{a\in\Lambda
_{s}^{N}}\left[  e^{\left(  t-s\right)  A}\nabla\theta_{N}\right]  _{\alpha
}^{2}\left\vert x-x^{\prime}\right\vert ^{2\alpha}ds\right\vert ^{p/2}\right]
\\
&  =\frac{C}{N^{p}}E\left[  \left\vert \int_{0}^{t}N\left[  S_{s}^{N}\right]
\left[  e^{\left(  t-s\right)  A}\nabla\theta_{N}\right]  _{\alpha}%
^{2}ds\right\vert ^{p/2}\right]  \left\vert x-x^{\prime}\right\vert ^{\alpha
p}\\
&  \leq CE\left[  \left[  S_{T}^{N}\right]  ^{p/2}\right]  \left(  \frac{1}%
{N}\int_{0}^{t}\left[  e^{\left(  t-s\right)  A}\nabla\theta_{N}\right]
_{\alpha}^{2}ds\right)  ^{p/2}\left\vert x-x^{\prime}\right\vert ^{\alpha p}.
\end{align*}
To apply Kolmogorov criterion we need $\alpha p>d$. If we choose $\alpha>0$
such that
\[
\frac{1}{N}\int_{0}^{t}\left[  e^{\left(  t-s\right)  A}\nabla\theta
_{N}\right]  _{\alpha}^{2}ds\leq C,
\]
then we can take $p$ so large that $\alpha p>d$. Hence, we may choose $\alpha$
as small as we want. We denote the uniform norm in the space of continuous
functions by $\left\Vert \cdot\right\Vert _{0}$. A rough computation (we do
not give details)\ gives us%
\[
\left[  e^{\left(  t-s\right)  A}\theta_{N}\right]  _{\alpha}^{2}\leq\left(
\frac{C}{\left(  t-s\right)  ^{\frac{\alpha}{2}+\frac{1}{2}-\alpha}}\right)
^{2}\left\Vert \left(  1-A\right)  ^{-\frac{1}{2}+\alpha}\nabla\theta
_{N}\right\Vert _{0}^{2}%
\]
hence%
\[
\frac{1}{N}\int_{0}^{t}\left[  e^{\left(  t-s\right)  A}\nabla\theta
_{N}\right]  _{\alpha}^{2}ds\leq\frac{C}{N}\left\Vert \left(  1-A\right)
^{-\frac{1}{2}+\alpha}\nabla\theta_{N}\right\Vert _{0}^{2}\leq\frac{C}%
{N}\left\Vert \left(  1-A\right)  ^{\alpha}\theta_{N}\right\Vert _{0}^{2}%
\leq\frac{C}{N}N^{2\beta+2\alpha\beta/d}.
\]
Since $\alpha$ can be taken arbitrarily small, the condition is $\beta<1/2$.

\begin{acknowledgement}
The work of F. F. is supported in part by University of Pisa under the Project
PRA\_2016\_41. The work of M. L. is supported in part by DFG RTG 1845.The work
of C. O. is supported in part by FAPESP 2015/04723-2 and CNPq through the
grant 460713/2014-0.
\end{acknowledgement}

\end{document}